\let\ORIlabel\label
\let\ORIrefstepcounter\refstepcounter
\AddToHook{package/hyperref/before}{
   \let\label\ORIlabel 
   \let\refstepcounter\ORIrefstepcounter}

\documentclass{siamart220329}
\usepackage{amsmath,amsfonts,amssymb}
\usepackage{graphicx}

\newtheorem{example}{Example}
\newtheorem{proposition1}{Proposition}[section]

\begin{document}

\title{Accuracy Analysis of the Proxy Point Method with Applications to Some Toeplitz Matrices\thanks{Submitted for review.\funding{The research of Mikhail Lepilov was supported in part by NSF grant DMS-2038118.}}}
\author{Mikhail Lepilov\thanks{Department of Mathematics, Rensselaer Polytechnic Institute, Troy, NY 12180 (lepilm@rpi.edu).}\and Jianlin Xia\thanks{Department of Mathematics, Purdue University, West Lafayette, IN 47907 (\mbox{jianlinxia@yahoo.com}).}}
\maketitle

\begin{abstract}
%Fast low-rank matrix approximation methods are invaluable in many areas of scientific computations. 
For some kernel matrices, low-rank approximations can be quickly obtained via analytic techniques. One important class of analytic methods that has received attention in recent years is based on the use of proxy points. Accuracy analysis for various proxy point methods has often been heuristic in nature, other than for certain special kernels. For more general cases, the methods lack an explicit number or location of proxy points required to yield a particular approximation accuracy. In this work, we carry out new analysis of a proxy point method that is applicable to general complex-analytic kernels. An intuitive way of choosing proxy points is used to show explicit error bounds. Such bounds decay exponentially with regard to the number of proxy points. This also leads to convenient estimates of numerical ranks of relevant kernel matrices. To showcase the utility of this new analysis, we apply it to design a new sublinear-time hierarchically semiseparable approximation method for certain Toeplitz matrices, including ones that frequently arise from real-world applications. This allows, for example, inversion of such matrices with lower computational complexity compared with existing direct methods. Some extensions of these ideas are also discussed.
%Furthermore, this new proxy point accuracy analysis also has applications to establishing matrix rank bounds. Finally, it may be readily generalized to several variables.
\end{abstract}

\headers{Mikhail Lepilov and Jianlin Xia}{Proxy Point Method Analysis and Applications}
\begin{keywords}
kernel matrix, proxy point method, approximation error, rank-structured matrix, Toeplitz matrix, sublinear complexity
\end{keywords}

\begin{AMS}
65E99, 65F55, 65G99
\end{AMS}

\section{Introduction}
\label{sec:intro}
Low-rank matrix approximation is a common task in many areas of mathematics, computation, and engineering. By a low-rank approximation of an $m\times n$ matrix $A$, we mean a decomposition $A\approx UV^T$, such that the column size of $U$ is much smaller than $m$ and $n$, and such that the approximation satisfies a certain accuracy requirement. Such an approximation allows even very large matrices to be represented, up to a certain accuracy, by matrices on which operations may be carried out significantly faster. Broadly speaking, we may divide (deterministic) low-rank approximation techniques into two classes: algebraic methods and analytic methods. Examples of algebraic methods include truncated SVDs and rank-revealing QR factorizations. Such methods are applicable to any matrix but are typically too slow to apply to large matrices. Examples of analytic methods include Taylor series expansions, interpolations, and proxy point methods. Such methods can be typically applied with little or essentially no computational cost, but require the matrix under consideration to be a kernel matrix defined by a kernel with desirable analytic properties.

In this work, we focus on one proxy point method which uses a set of proxy points to quickly construct basis matrices in low-rank approximations of kernel matrices. The roots of this idea may be traced back to the fast multipole method (FMM) \cite{gre87} and its variants \cite{mar07,yin04}. The error introduced by an analytic low-rank approximation is governed by analytic properties of the kernel in question. 
% For the aforementioned Taylor approximation-based methods, such analysis is carried out, for example, in \cite{mhs}. More precise statements can also be given with classical approximation techniques using the modulus of continuity of the kernel. 
For proxy point methods, most accuracy analysis has relied on heuristics, arguing exponential convergence in the number of proxy points. Such analysis may be found in \cite{mar05-1,yin04,cauchyfmm}. This is contrasted with the type of analysis carried out for the proxy point method in \cite{kercompr}, which relates the number and location of proxy points to accurate error bounds of the low-rank approximation of the kernel matrix. However, the analysis in this last reference is only applicable to kernels of the form $k(x,y)=1/(x-y)^d$ for $x,y\in\mathbb{C}$ and $d\in\mathbb{N}$.

Here, we generalize this latter type of analysis to any one-dimensional complex-analytic kernel, bounding the error of the proxy point method. Since for well-separated sets, many commonly used kernels are complex-analytic in each variable on a region containing the set (holding the other variable constant), this analysis applies widely to one-dimensional kernels. We also show that, if such a kernel satisfies a certain univalence criterion, we may select proxy points in a manner similar to the work in \cite{kercompr} so as to get a rigorous accuracy bound that decays exponentially with respect to the number of proxy points. This also yields a convenient numerical rank estimate for the associated kernel matrix.
%the resulting accuracy bound on its proxy point approximation shows that number of proxy points required to guarantee a desired approximation tolerance is logarithmic in the tolerance.

In addition to its theoretical utility, such analysis is useful, for example, when performing interpolative decompositions \cite{lib07} of matrices using function approximation by a proxy point method. In \cite{xin20}, for example, even though the error introduced in the function approximation step is used to provide a bound for the error incurred in the interpolative decomposition step, the function approximation error is not itself studied. In this way, we hope that this work helps to fill gaps in the existing literature on proxy point methods.

As another illustration of the utility of this analysis, and following our work in \cite{hypercauchy}, we introduce a new sublinear-time hierarchically semiseparable (HSS) approximation algorithm for certain Toeplitz matrices arising from univalent maps applied to regular grids. Such matrices appear, say, as covariance matrices of Gaussian processes like in \cite{gausstoep}. A general overview of fast direct computations with Gaussian process covariance matrices is given in \cite{gaussproc}. In this context, the kernel matrix is obtained from applying a given positive-definite kernel to a regular 1-dimensional grid.
Existing rank-structured (HSS or similar) approximation construction schemes for such matrices carry at least $O(n)$ costs \cite{gaussproc, toep,toeprs}, so our scheme represents a substantial speedup over existing methods for general Toeplitz matrices such as those recently studied in \cite{bkw}.
% In particular, for applicable matrices, this provides an asymptotically faster direct solver than previously shown. This is because the fastest existing solution schemes solve an equivalent system in Fourier space using structured matrices, requiring the application of the Fast Fourier Transform (FFT) to the solution at the end. By contrast, we avoid this $O(n\log n)$ FFT time cost, allowing for an asymptotically faster solution. 
% The aforementioned univalence criterion then allows us to decide which given matrix may be compressed with this scheme. Matrices to which the new scheme applies include certain one-dimensional Gaussian kernel matrices formed on a regular grid, an example of which appears in \cite{gausstoep}, among other settings.

Additionally, we propose an extension of the ideas to kernels of dimension greater than one.
We also verify that our proxy point approximation error bounds are illustrative of real-world performance by carrying out several numerical tests. Tests on the accuracy bounds for the proxy point low-rank approximation applied to different sets of points and kernels are given. We also carry out tests of the above HSS compression scheme applied to specific matrices.

The rest of this paper is structured as follows. In Section~\ref{sec:proxy}, we go over the proxy point method and perform the new proxy point error analysis for general complex-analytic functions. We also show how to use this analysis to guarantee the efficacy of proxy point approximations to some Toeplitz matrices. In Section~\ref{sec:toeplitzconst}, we briefly review HSS matrix approximation in order to detail the HSS construction algorithm for the Toeplitz matrices under our consideration. In Section~\ref{sec:numerical}, we perform some numerical tests. Finally, we suggest some extensions of this work in Section~\ref{sec:ext}.

Throughout the paper, we use the following notation.
\begin{itemize}
\item Let $c\in\mathbb{C}$ and $r>0$. Then $\mathcal{B}(c,r)$ denotes the open ball in $\mathbb{C}$ with center $c$ and radius $r$, $\mathcal{O}(\mathcal{B}(c,r))$ denotes the set of holomorphic functions on $\mathcal{B}(c,r)$, and $\mathbb{D}$ denotes $\mathcal{B}(0,1)$.
\item For integers $i\leq j$, $[i,j]$  denotes the set  $\{i,i+1,\ldots,j\}$.
%$[a,b]=\{j\in\mathbb{N}:\;a\leq j\leq b\}$. (In particular, with this notation we will never mean the closed interval on the real line from $a$ to $b$.) 
\item Let $k:F\times G\to\mathbb{C}$ be a function and $X\subseteq F,Y\subseteq G$ be totally-ordered finite subsets of size $r$ and $s$, respectively. Then $k(X,Y)=(k\left(x_i,y_j\right))_{r\times s}$ means the $r\times s$ matrix with $(i,j)$ entry $k\left(x_i,y_j\right)$, where $x_i$ is the $i$th element of $X$ and $y_j$ is the $j$th element of $Y$.
\item Let $C$ be an $m\times n$ matrix and $M\subseteq\{1:m\},N\subseteq\{1:n\}$. Then by $A_{M\times N}$ we mean the $|M|\times|N|$ submatrix of $A$ picked by the row index set $M$ and column index set $N$.%consisting of entries $B_{j,k}=A_{l_j,m_k}$, where $l_j$ is the $j$th element of $L$ and $m_k$ is the $k$th element of $M$, ordered the usual way.
\end{itemize}

\section{Accuracy analysis for the proxy point method}
\label{sec:proxy}
For a kernel matrix $k(X,Y)$ defined by the evaluation of a kernel function $k(x,y)$ at two finite sets $X,Y\subseteq\mathbb{C}$, the proxy point method is a simple yet powerful way for finding a low-rank approximation. The idea is to pick an appropriate set of proxy points $Z\subseteq\mathbb{C}$ based on ideas from, for example, potential theory, function interpolation, or an integral representation \cite{kre14,mar07,xin20,kercompr,yin04}. A low-rank approximation then carries the form
\begin{equation}
    k(X,Y)\approx UV^T,\quad \text{with}\quad V=k(Y,Z).\label{eq:lr}
\end{equation}
(Alternatively, the form may be $UV^T$ with $U=k(X,Z)$, depending on the context.) In particular, it is shown in \cite{kercompr} that one way of approximating $k(x,y)=1/(x-y)^d$, for $d\in\mathbb{N}$, with an integral representation is to use the Cauchy integral formula. This would then allow us to take $Z$ to be a set of quadrature points. In this work, we expand this idea to more general kernels.

\subsection{Accuracy of kernel function approximation}
We follow the strategy of \cite{kercompr} but derive the approximation error results for general complex-analytic kernels $k(x,y)$. 
%First, we review the proxy point method and provide a new bound for the error introduced in its application. For a detailed discussion of this method applied to a specific kernel function, as well as its error analysis, see \cite{kercompr}. 
Let $D=\mathcal{B}(c,r)$ and $E=\mathcal{B}(c,R)$ be open balls in $\mathbb{C}$, with $r<R$. Let $X=\{x_j\}_{j=1}^m\subseteq D$ and $Y=\{y_j\}_{j=1}^n$ be finite sets, and let $k:\mathbb{C}\times Y\to\mathbb{C}$ be a function such that, for each $y\in Y$,  $k(z,y)$ is an analytic function of $z$ on $E$.
%\textcolor{red}{[???]}
Then, for each $x\in X,y\in Y$, by the Cauchy integral formula, we have
\begin{equation}
k(x,y)=\frac{1}{2\pi i}\int_{C}\frac{k(\zeta,y)}{\zeta-x}d\zeta,\label{eq:int}
\end{equation}
where $C$ is the boundary of an open ball with center $c$ and radius $\sqrt{Rr}$. See Figure~\ref{fig:proxy}. Here, we chose the radius $\sqrt{Rr}$ heuristically from the study in \cite{kercompr} (that was conducted for a special kernel only). We will see shortly that precisely this choice of radius allows the analysis of this section to give a good accuracy bound for our proxy point approximation to $k(x,y)$.

\begin{figure}[ptbh]
\centering
\tabcolsep5mm
\includegraphics[height=1.8in]{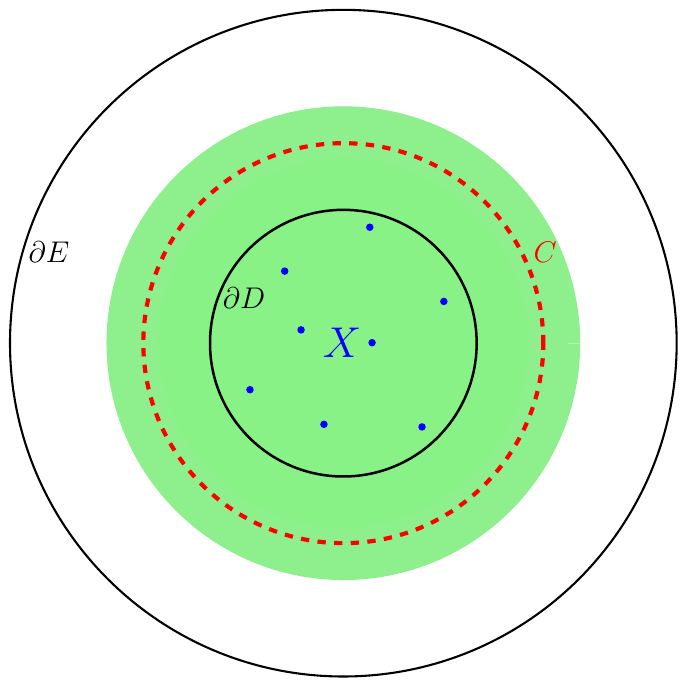}
% \begin{tabular}[c]{p{2.1in}p{2.1in}}
% \centering
% \includegraphics[height=2in]{proxy} & \includegraphics[height=2in]{proxy2}\\
% \vspace{.1in}{\small (a) The contour $C$ (dashed), the finite set $X$, and the boundaries of the open balls $D$ and $E$. The shaded green region shows the annulus $F'$ used in the proof of Proposition~\ref{prop:proxelwisebound}, which is a compact region where $k_{x,y}(z)$ is analytic in $z$ for all $x\in X$ and $y\in Y$.
% \%($k_{x,y}(z)$ is actually assumed to be analytic in $z$ on all of $E\setminus D$, but the proof requires a compact region.)
% }& \vspace{.1in}{\small (b) The finite sets $X$  and $Z~=~\bigcup_{j=1}^Nz_j$ (dots), as well as the boundaries of the open balls $D$ and $E$.}\end{tabular}
\caption{The contour $C$ (dashed), the finite set $X$, and the boundaries of the open balls $D$ and $E$. (The set $Y$, which parametrizes the domain of analyticity in $z$ of $k(z,y)$, is not pictured.) The shaded green region shows the disk $F$ defined in Proposition~\ref{prop:proxelwisebound}, which is a compact region where the maximum of $k(z,y)$ (over all $y\in Y$) determines the approximation bound.
%($k_{x,y}(z)$ is actually assumed to be analytic in $z$ on all of $E\setminus D$, but the proof requires a compact region.)
}\label{fig:proxy}
\end{figure}
%{\textcolor{blue}{[As another example where proper transition/explanation is needed: in the caption of the figure, $k_{x,y}(z)$ and $A$ are undefined. They are defined later in the proof of prosition 2.1, but this figure appears earlier. BTW, it is better to not use $A$ for the annulus since later it will be used for a matrix. Moreover, it says $k(x,y)$ is analytic within the annulus region. But it then also says it is actually assumed to be analytic in $E\setminus D$.  Why such discrepancy? Moreover, in the 1st paragraph of this section and in proposition 2.1, it says it is analytic on $E$. --- {\bf all such inconsistent/unmotivated statements are very confusing}]}}

Using the trapezoidal rule with $p$ points to approximate \eqref{eq:int}, we then have
\begin{equation}
k(x,y)=\frac{\sqrt{Rr}}{p}\sum_{j=1}^{p}\left(\frac{1}{z_j-x}\right)\left(\omega^jk(z_j,y)\right)+\epsilon,
\label{eq:proxyapprox}
\end{equation}
where
\begin{equation}
    z_j=c+\sqrt{Rr}\omega^j,\quad  \omega=e^{\frac{2\pi i}{p}},\label{eq:pp}
\end{equation}
and the termwise error $\epsilon\in\mathbb{C}$ ideally has very small magnitude. The points $z_j$ will serve as our proxy points. That is, $Z~=\{z_j\}_{j=1}^p$ in (\ref{eq:lr}). (\ref{eq:proxyapprox}) provides a separable, or degenerate, expansion for $k(x,y)$. Hence, in this setup, for the low-rank approximation \eqref{eq:proxyapprox}, we take
\begin{equation*}
    U=-\frac{\sqrt{Rr}}{p}\left(\frac{\omega^j}{x_i-z_j}\right)_{m\times p}=\frac{1}{p}\left(\frac{c-z_j}{x_i-z_j}\right)_{m\times p},\quad V=k(Y,Z).
\end{equation*}
(Notice $m=|X|$ and $p=|Z|$.) This yields a proxy point low-rank approximation to $k(X,Y)$.

Note that in the setup for \eqref{eq:int} above and in Figure~\ref{fig:proxy}, the geometric role of the set $Y$ is not explicitly mentioned or shown, even though the exact locations of the points in $Y$ are a key part of the analysis to follow. Indeed, in the existing literature, $Y$ is typically assumed to be well separated from $X$ and the distance between $X$ and $Y$ is used for certain bounds or heuristics \cite{hypercauchy,kercompr}. Here, the geometric information of the set $Y$ factors into the above by determining the domain of analyticity (in $z$) of $k(z,y)$, which in turn determines $R$ and $r$, given $c$. It also determines the disk $F=\mathcal{B}(c,r\left(R/r\right)^{3/4})$, shown in Figure~\ref{fig:proxy} and used to quantify the growth of $k$ in the bound of Proposition~\ref{prop:proxelwisebound}.

For example, consider the kernel $k(x,y)=1/(x-y)$, and fix $c=0$. $R$ is chosen to be the largest radius such that $\mathcal{B}(0,R)$ does not contain a point of $Y$.  So, if the closest point to $c$ in $Y$ is $y_j=i(=\sqrt{-1})$,
%{\textcolor{blue}{[Should $i$ be 1? Why not simply say that $Y$ is outside $\partial E$, at least for this Cauchy kernel?]}}
then we may pick $E=\mathcal{B}(0,1)$ and therefore obtain $R=1$. On the other hand, if the closest point in $Y$ to $c$ is $y_j=2$, we may pick $E=\mathcal{B}(0,2)$ and therefore $R=2$. Similarly, $r$ is chosen to be the smallest radius such that $\mathcal{B}(0,r)$ contains all the points in $X$.
We will see that maximizing the ratio $R/r$ in this manner enables the analysis below to give the best asymptotic proxy point approximation bound, given a tame growth of $k(z,y)$ in $z$ on $F$. This matches the heuristic explored in \cite{kercompr}.
%{\textcolor{blue}{[By 'tightest', it sounds like this is the best among any possible bounds.]}}

Our first goal is to find a bound for $\vert\epsilon\vert$ in \eqref{eq:proxyapprox}. To do so, we give the following result. Its proof is based on classical techniques, including those for the proof of \cite[Theorem 2.2]{trefethen}, with some modifications for the kernel matrix context.

%{\textcolor{blue}{[Since $Y$ is not shown but $E$ is, it is better to explicitly state the role of $E$ and why $Y$ is not shown. Maybe also mention possible locations of $Y$. Currently, the reader may be confused. Also, the symbol $k$ is used for subscripts somewhere, better to replace it by $\kappa$]}}

%{\textcolor{blue}{[It is not very formal to use semicolons to separate multiple complete sentences like "let ...; let ..." in propositions, or even elsewhere in a publication. Please fix all such usage in the paper]}}
\begin{proposition1}
\label{prop:proxelwisebound}
Let $D=\mathcal{B}(c,r)$ and $E=\mathcal{B}(c,R)$ be open balls in $\mathbb{C}$, with $r<R$, and let $X\subseteq D$ and $Y$ be finite sets. Given $p$, let each $z_j$ for $1\leq j\leq p$ be defined as in (\ref{eq:pp}), and let $k:\mathbb{C}\times Y\to\mathbb{C}$ be a function such that, for each $y\in Y$, $k(z,y)$ is an analytic function of $z$ on $E$. Then for each $x\in X,y\in Y$, we have
\begin{equation}
\label{eq:termwisebound}
|\varepsilon|=\left\vert k(x,y)-\frac{\sqrt{Rr}}{p}\sum_{j=1}^{p}\left(\frac{1}{z_j-x}\right)\left(\omega^jk(z_j,y)\right)\right\vert\leq \alpha\frac{\max_{z\in\partial F}\left\vert k(z,y)\right\vert}{\left(R/r\right)^{p/4}-1},
\end{equation}
where $\alpha=2\frac{(R/r)^{1/4}}{(R/r)^{1/4}-1}$ and $F=\mathcal{B}(c,r\left(R/r\right)^{3/4})$.
\end{proposition1}
\begin{proof}
Fix $x\in X$ and $y\in Y$. First, by the parametrization $\gamma(t)=c+e^{ti}\sqrt{Rr}$ of the contour $C$ in \eqref{eq:int}, we may write
\[
k(x,y)=\frac{1}{2\pi i}\int_0^{2\pi}\frac{k(c+\sqrt{Rr}e^{ti},y)(i\sqrt{Rr}e^{ti})}{c+\sqrt{Rr}e^{ti}-x}dt.
\]
Define $k_{x,y}:\mathcal{B}(0,\sqrt{R/r})\setminus\overline{\mathcal{B}(0,\sqrt{r/R})}\to\mathbb{C}$ by
\begin{equation}
k_{x,y}(z)=\frac{k(c+z\sqrt{Rr},y)(z\sqrt{Rr})}{c+z\sqrt{Rr}-x}.\label{eq:kxy}
\end{equation}
Then
\begin{equation}
k(x,y)=\frac{1}{2\pi i}\int_0^{2\pi}\frac{k_{x,y}(e^{ti})ie^{ti}}{e^{ti}}dt
=\frac{1}{2\pi i}\int_{\partial \mathcal{B}(0,1)}\frac{k_{x,y}(\zeta)}{\zeta}d\zeta\equiv a_0,\label{eq:kxyquad}
\end{equation}
where $a_0$ denotes the $0$th Laurent coefficient of $k_{x,y}$.

Consider the Laurent expansion of $k_{x,y}(z)$ at $0$:
\begin{equation}
    k_{x,y}(z)=\sum_{l=-\infty}^\infty a_lz^l,
    \label{eq:laurentexp}
\end{equation}
which, by our assumption on $k$, is valid everywhere that $k_{x,y}$ is defined.
%{\textcolor{blue}{[How does this specific annulus come up? It does not look like there is a particular reason for these two specific radii? Also, fig 2.1 says $k_{x,y}$ is analytic on $F'$, while it should be this one. $F'$ is the version when mapping back to the original sets. There is confusion in the annuli. BTW, the symbol $A$ was used on multiple different occasions.]}}
We show that the sum of some Laurent coefficients $|a_l|$ can be used to bound the error incurred in applying the trapezoidal quadrature rule to \eqref{eq:kxyquad} as in \eqref{eq:proxyapprox}. In fact,  \eqref{eq:proxyapprox}, \eqref{eq:pp}, and \eqref{eq:kxy} mean
\begin{equation*}
    \varepsilon=k(x,y)-\frac 1p\sum_{j=1}^{p} k_{x,y}(\omega^j).
\end{equation*}

For some steps below, we need a compact subregion on which the expansion \eqref{eq:laurentexp} holds, because we will require absolute convergence, and because we will need to be able to bound certain quantities using values taken on its boundary. In particular, we define the annulus $A=\overline{\mathcal{B}\left(0,(R/r)^{1/4}\right)}\setminus\mathcal{B}\left(0,(r/R)^{1/4}\right)$. The radii here are chosen to scale with $R/r$. Note that taking the radii to be $(R/r)^s$ and $(r/R)^s$ for any $s\in(1,1/2)$ would work just as well, and this would slightly alter the error bound. We will return to this matter immediately after the proof of this theorem.

Now, since each $\omega^j\in A$ and $A$ is compact, we have
\begin{equation*}
\frac 1p\sum_{j=1}^pk_{x,y}\left(\omega^j\right)=\frac 1p\sum_{j=1}^p\sum_{l=-\infty}^\infty a_l\omega^{jl}=\frac 1p\sum_{l=-\infty}^\infty a_l\sum_{j=1}^p\omega^{jl}=\sum_{l=-\infty}^\infty a_{pl},
\end{equation*}
where the last line follows from the fact that $\sum_{j=1}^p\omega^{jl}$ is $p$ if $l$ is a multiple of $p$ and is $0$ otherwise. Hence, by \eqref{eq:kxyquad}, we get
\begin{equation}
|\varepsilon|=\left\vert a_0-\sum_{j=1}^p\frac{1}{p}k_{x,y}\left(\omega^j\right)\right\vert
=\left\vert a_0-\sum_{l=-\infty}^\infty a_{pl}\right\vert
\leq\sum_{l=-\infty}^{-1}\left\vert a_{pl}\right\vert+\sum_{l=1}^\infty\left\vert a_{pl}\right\vert.
\label{eq:laurentbound}
\end{equation}

Next, we bound the magnitude of the Laurent coefficient $a_{pl}$ of $k_{x,y}$ using $R$, $r$, $p$, and the maximum of $k$ over $F=\mathcal{B}(c,r\left(R/r\right)^{3/4})$. (Recall that $F$ is the shaded region in Figure~\ref{fig:proxy}.) To do so, define $\tilde{A}$ to be the image of $A$ under the change of coordinates $\tilde{z}=c+z\sqrt{Rr}$. That is, $\tilde{A}=\overline{F}\setminus\mathcal{B}\left(c,r(R/r)^{1/4}\right)$. Noting \eqref{eq:kxy}, we have%{\textcolor{red}{[Should the 2nd inequality below be an equality? Also, how about explicitly point out the mapping between $\tilde{z}\in F'$ and $z\in A$ through $\tilde{z}=c+z\sqrt{Rr}$?]}}
\begin{align*}
\max_{z\in A}|k_{x,y}(z)|&\leq\max_{z\in A}\left\vert\frac{z\sqrt{Rr}}{c+z\sqrt{Rr}-x}\right\vert\max_{z\in A}\left\vert k\left(c+z\sqrt{Rr},y\right)\right\vert\\
&=\max_{z\in \tilde{A}}\left|\frac{z-c}{z-x}\right|\max_{z\in \tilde{A}}|k(z,y)|\\
&=\frac{r(R/r)^{1/4}}{r((R/r)^{1/4}-1)}\max_{z\in \tilde{A}}|k(z,y)|=\frac{\alpha}{2}\max_{z\in \tilde{A}}|k(z,y)|\\
&\leq\frac{\alpha}{2}\max_{z\in F}|k(z,y)|\leq\frac{\alpha}{2}\max_{z\in\partial F}|k(z,y)|,
\end{align*}
where the last two inequalities follow from the maximum modulus principle and the fact that $k(z,y)$ is holomorphic on $z$ on all of $E$ (and not just $\tilde{A}$). %{\textcolor{red}{[how about moving $\le\frac{\alpha}{2}\max_{z\in F}\left\vert k(z,y)\right\vert $ from the following set of inequalities to above?]}}

Now, note that for $l>0$, we have the bound
\begin{equation}
\label{eq:firstlaurentbound}
|a_l|\leq\left\vert\frac{1}{2\pi}\int_{|\zeta|=(R/r)^{1/4}}\frac{k_{x,y}(\zeta)}{\zeta^{l+1}}d\zeta\right\vert\leq\frac{1}{2\pi}\left(\frac{R}{r}\right)^{-l/4}\max_{|\zeta|=(R/r)^{1/4}}|k_{x,y}(\zeta)|.
\end{equation}
Similarly, for $l<0$, we have the bound 
\begin{equation}
\label{eq:secondlaurentbound}
|a_l|\leq\left\vert\frac{1}{2\pi}\int_{|\zeta|=(r/R)^{1/4}}\frac{k_{x,y}(\zeta)}{\zeta^{l+1}}d\zeta\right\vert\leq\frac{1}{2\pi}\left(\frac{r}{R}\right)^{-l/4}\max_{|\zeta|=(r/R)^{1/4}}|k_{x,y}(\zeta)|.
\end{equation}
Hence, for each $l\in\mathbb{Z}$ with $l\neq0$, we have
\begin{align*}
\left\vert a_l\right\vert&\leq\max\left\{\left\vert\frac{1}{2\pi}\int_{\vert\zeta\vert=(R/r)^{1/4}}\frac{k_{x,y}(\zeta)}{\zeta^{l+1}}d\zeta\right\vert,\left\vert\frac{1}{2\pi}\int_{\vert\zeta\vert=(r/R)^{1/4}}\frac{k_{x,y}(\zeta)}{\zeta^{l+1}}d\zeta\right\vert\right\}\\
%{\textcolor{red}{[is\; this\; step\; correct? maybe\; explain]}}
&\leq\frac{\max_{z\in A}\left\vert k_{x,y}(z)\right\vert}{\left((R/r)^{\vert l\vert/4}\right)}
\leq\frac{\alpha}{2}\frac{\max_{z\in\partial F}\left\vert k(z,y)\right\vert}{\left((R/r)^{\vert l\vert/4}\right)}.
\end{align*}
Combining this with \eqref{eq:laurentbound}, we get
\[
|\varepsilon|\leq\frac{\alpha}{2}\left(2\sum_{l=1}^\infty\frac{\max_{z\in\partial F}\left\vert k(z,y)\right\vert}{\left((R/r)^{1/4}\right)^{pl}}\right)=\alpha\frac{\max_{z\in\partial F}\left\vert k(z,y)\right\vert}{(R/r)^{p/4}-1}.
\]
\end{proof}

A thorough discussion of related bounds can be found in \cite{trefethen}, along with an idea for an alternative proof using residue calculus. However, the bounds given there and elsewhere in the numerical analysis literature do not simultaneously and explicitly bound the quadrature error for all values of the enclosed set $X$ for each $y\in Y$, which is the setting of the proxy point method. As we mentioned, a word regarding the constant $s=1/4$ used in defining $A$ in the proof above is in order. This constant governs the coefficient of $p$ in the final asymptotic rate of convergence of Equation~\eqref{eq:termwisebound}. In fact, it is possible in some cases to carry out a more sophisticated analysis from that which is given here, as suggested in \cite{trefethen} but not explicitly carried out anywhere in the literature (to the authors' knowledge), using $s=1/2$ and the Cauchy principal value instead of the contour integral in \eqref{eq:firstlaurentbound} and \eqref{eq:secondlaurentbound}. This would improve asymptotic bounds by a factor of two, but would require assuming that $k$ only has first-order pole singularities.

Finally, we note that the setup of this proof also provides support for the heuristic, noted above and shown in \cite{kercompr} for the Cauchy kernel, that we should pick $C$ to have radius $\sqrt{Rr}$ as in the setup of this section.

We next use Proposition~\ref{prop:proxelwisebound} to bound the entrywise error for the low-rank approximation of a kernel matrix via the proxy point point method, which allows us to guarantee applicability of the HSS construction method in Sections~\ref{sec:leafhss} and \ref{sec:resthss}

\subsection{Accuracy for kernel matrix low-rank approximations}
The termwise error bound for each element $k(x,y)$ allows us to obtain an absolute 2-norm error bound for the approximation to the matrix $k(X,Y)$. Furthermore, if $k$ satisfies a univalence condition, we may obtain a relative 2-norm error bound that guarantees exponential convergence in the number of proxy points.

\begin{proposition1}
\label{prop:fnbound}
Suppose that the conditions in Proposition~\ref{prop:proxelwisebound} hold, and let $X=\{x_1,\ldots,x_l\}$, $Y=\{y_1,\ldots,y_m\}$, and $Z=\{z_1,\ldots,z_p\}$. Define
\begin{equation*}
\qquad\qquad\qquad U=\sqrt{Rr}\left(\frac{1}{z_j-x_i}\right)_{x_i\in X,z_j\in Z}
\operatorname{diag}(\omega,\omega^2,\ldots,\omega^p),\quad
%V&=\begin{pmatrix} k\left(y_1,z_1\right) & k\left(y_2,z_1\right) & \cdots & k\left(y_m,z_1\right)\\
%k\left(y_1,z_2\right) & k\left(y_2,z_2\right) & \cdots & k\left(y_m,z_2\right)\\
%\vdots & \vdots & \ddots & \vdots\\
%k\left(y_1,z_p\right) & k\left(y_2,z_p\right) & \cdots & k\left(y_m,z_p\right)\end{pmatrix},
V=k(Y,Z),
\end{equation*}
where $\operatorname{diag}()$ denotes a diagonal matrix. Then with the notation in Proposition~\ref{prop:proxelwisebound},
\begin{equation}
\left\Vert k(X,Y)-UV^T\right\Vert_2\leq lm\alpha\frac{\max_{y\in Y,z\in\partial F}|k(z,y)|}{(R/r)^{p/4}-1}.
\label{eq:proxtol}
\end{equation}

Furthermore, if in addition, $l\geq2$, $c$ is one of the points in $X$, and $k(z,y)$ is bounded and univalent as a function of $z$ on $E$ for each $y\in Y$, then
\begin{equation*}
\frac{\left\Vert k(X,Y)-UV^T\right\Vert_2}{\left\Vert k(X,Y)\right\Vert_2}\leq \frac{lm\alpha(1+\beta)}{(R/r)^{p/4}-1},
\end{equation*}
where $\beta=\frac{2(R/r)^{3/4}(1+(r/R)^2)}{(1-(r/R)^{1/4})^2}$.%{\textcolor{red}{since both $\alpha$ and $\beta$ are defined from $r,R$, would it be better to use a single symbol for a simplified representation of $\alpha\beta$?}}
\end{proposition1}
\begin{proof}
The first result is obvious based on the entrywise error bound in Proposition~\ref{prop:proxelwisebound}.
To see the second result, we first bound the function maximum on the right-hand side of \eqref{eq:proxtol}. The condition that $k(z,y)$ is univalent in $z$ on $E$ allows us to bound its growth away from $c$ by the distance from $c$. In particular, we use the growth theorem for univalent maps on the unit disk that take the value 0 and have derivative equal to 1 at the origin. Such maps are called regular univalent, or {\it schlicht}, maps. We define a regular map $g_y(z)$ on the unit disk that takes values related to $k(z,y)$, use the growth theorem to bound its growth away from 0, and then use this to bound the growth of $k(z,y)$ away from $c$. More precisely, for each $y\in Y$, we define $g_y:\mathbb{D}\to\mathbb{C}$ as follows:%{\textcolor{red}{[$\mathbb{D}$ is undefined. I suppose it is the unit disk?]}}
\begin{align*}
f_y(z)&=k(z,y),\\
h_y(z)&=Rz+c,\;\mathrm{and}\\
g_y(z)&=\frac{\left(f_y\circ h_y\right)(z)-\left(f_y\circ h_y\right)(0)}{\left(f_y\circ h_y\right)'(0)}.
\end{align*}
(Note that the denominator is nonzero because $f_y\circ h_y$ is a composition of the univalent maps $f_y$ and $h_y$ on $\mathcal{B}(c,R)$ and $\mathbb{D}$, respectively, and hence univalent.) Then each $g_y$ is regular and univalent, so by the growth theorem, we have $\frac{\vert z\vert}{\left(1+\vert z\vert\right)^2}\leq\left\vert g_y(z)\right\vert\leq\frac{\vert z\vert}{\left(1-\vert z\vert\right)^2}$. Thus, for $z\in\mathbb{D}$,
\begin{align}
\frac{\vert z\vert\left\vert\left(f_y\circ h_y\right)'(0)\right\vert}{\left(1+\vert z\vert\right)^2}&\leq\left\vert\left(f_y\circ h_y\right)(z)-\left(f_y\circ h_y\right)(0)\right\vert\leq\frac{\vert z\vert\left\vert\left(f_y\circ h_y\right)'(0)\right\vert}{\left(1-\vert z\vert\right)^2},\;\mathrm{or}\nonumber\\
\label{eq:growththm}
\frac{\vert z\vert R\left\vert f'_y(c)\right\vert}{\left(1+\vert z\vert\right)^2}&\leq\left\vert k(Rz+c,y)-k(c,y)\right\vert\leq\frac{\vert z\vert R\left\vert f'_y(c)\right\vert}{\left(1-\vert z\vert\right)^2}.
\end{align}
Therefore, from the second inequality of \eqref{eq:growththm}, we have for $z\in\partial\mathcal{B}\left(0,(r/R)^{1/4}\right)$ that
\begin{align*}
\left\vert k(Rz+c)\right\vert&\leq\frac{R(r/R)^{1/4}\left\vert f_y'(c)\right\vert}{(1-(r/R)^{1/4})^2}+\left\vert k(c,y)\right\vert\nonumber\\
&=\frac{R^{3/4}r^{1/4}\left\vert f_y'(c)\right\vert}{(1-(r/R)^{1/4})^2}+\left\vert k(c,y)\right\vert.
\end{align*}
Define $\gamma_1=\frac{R^{3/4}r^{2/4}}{(1-(r/R)^{1/4})^2}$. Then, for $z\in\partial F$ and $y\in Y$,
\begin{equation}
\label{eq:growththm2}
\left\vert k(z,y)\right\vert\leq\gamma_1\left\vert f_y'(c)\right\vert+\left\vert k(c,y)\right\vert.
\end{equation}
Now, defining $\gamma_2=\frac{\alpha}{(R/r)^{p/4}-1}$ and $y_0=\arg\max_{y\in Y}\left(\gamma_1\left\vert f_y'(c)\right\vert+\left\vert k(c,y)\right\vert\right)$, by Equations~\eqref{eq:proxtol} and \eqref{eq:growththm2}, it follows that
\begin{align}
\frac{\left\Vert k(X,Y)-UV^T\right\Vert_2}{\left\Vert k(X,Y)\right\Vert_2}&\leq lm\gamma_2\left(\frac{\gamma_1\left\vert f_{y_0}'(c)\right\vert+\left\vert k(c,y_0)\right\vert}{\left\Vert k(X,Y)\right\Vert_2}\right)\nonumber\\
&\leq lm\gamma_2\left(\frac{\gamma_1\left\vert f_{y_0}'(c)\right\vert+\left\vert k(c,y_0)\right\vert}{\left\Vert k(X,Y)\right\Vert_1}\right)\nonumber\\
&=lm\gamma_2\left(\frac{\gamma_1\left\vert f_{y_0}'(c)\right\vert+\left\vert k(c,y_0)\right\vert}{\max_{y\in Y}\left(\sum_{j=1}^l\left\vert k(x_j,y)\right\vert\right)}\right).\label{eq:relbound}
\end{align}
From the fact that $l\geq2$ and $c=x_{j_0}$, for some integer $j_0\in[1,l]$, there exists an integer $j_1\in[1,l]$ such that $x_{j_1}$ is a distance $r$ away from $x_{j_0}$. We know from the first inequality of \eqref{eq:growththm} that
\begin{equation*}
\frac{(r/R)R|f_y'(c)|}{1+(r/R)^2}=\frac{|(1/R)(x_{j_1}-c)|R|f_y'(c)|}{1+|(1/R)(x_{j_1}-c)|^2}\leq|k(x_{j_1},y)-k(x_{j_0},y)|.
\end{equation*}
Hence, by the triangle inequality applied to $k(x_{j_0},y)$, $k(x_{j_1},y)$, and $0$, we have
\begin{equation*}
(1/2)\frac{r\left\vert f_y'(c)\right\vert}{(1+(r/R)^2)}\leq\max(\left\vert k(x_{j_1},y)\right\vert,\left\vert k(x_{j_0},y)\right\vert)=\max(\left\vert k(x_{j_1},y)\right\vert,\left\vert k(c,y)\right\vert).
\end{equation*}
In particular, we then have the following bound on the denominator of $\eqref{eq:relbound}$:
\begin{align}
\max\left(\left\vert k(c,y_0)\right\vert,\frac{r\left\vert f_{y_0}'(c)\right\vert}{2(1+(r/R)^2)}\right)&\leq\max_{y\in Y}\left(\max\left(\left\vert k(c,y)\right\vert,\frac{r\left\vert f_y'(c)\right\vert}{2(1+(r/R)^2)}\right)\right)\nonumber\\
&\leq\max_{y\in Y}\left(\sum_{j=1}^l\left\vert k(x_j,y)\right\vert\right).
\label{eq:trianglelbdenom}
\end{align}
Combining \eqref{eq:relbound} and \eqref{eq:trianglelbdenom}, we thus have
\begin{align*}
\frac{\left\Vert k(X,Y)-UV^T\right\Vert_2}{\left\Vert k(X,Y)\right\Vert_2}&\leq lm\gamma_2\left(\frac{\gamma_1\left\vert f_{y_0}'(c)\right\vert+\left\vert k(c,y_0)\right\vert}{\max\left(\left\vert k(c,y_0)\right\vert,\frac{r\left\vert f_{y_0}'(c)\right\vert}{2(1+(r/R)^2)}\right)}\right)\\
&\leq lm\gamma_2\left(\frac{\left\vert f_{y_0}'(c)\right\vert}{\left\vert f_{y_0}'(c)\right\vert}+\frac{\frac{R^{3/4}r^{1/4}\left\vert f_{y_0}'(c)\right\vert}{(1-(r/R)^{1/4})^2}}{\frac{r\left\vert f_{y_0}'(c)\right\vert}{2(1+(r/R)^2)}}\right)\\
&=lm\gamma_2(1+\beta).
\end{align*}
%{\textcolor{red}{how did $\alpha$ reappear above?}}
The result then follows by our definition of $\gamma_2$.
\end{proof}

Hence, for a given 2-norm tolerance $\tau$ of the proxy point approximation to $k(X,Y)$, we only need to use $O\left(\log lm+\log\tau\right)$ proxy points, as long as the assumption on the analyticity of $k$ holds, and as long as $k$ grows sub-exponentially on the relevant domains. If $k$ satisfies a certain univalence condition, then Proposition~\ref{prop:fnbound} guarantees slow growth and hence a relative error bound for the approximation to $k(X,Y)$ that decreases exponentially in the number of proxy points $p$. It is worth noting again that the ratio $R/r$, determined by the geometry of the points $X$ and $Y$ and the analyticity of $k$, continues to be the key parameter governing convergence of the proxy point approximation to $k(X,Y)$.

\subsection{Approximating Toeplitz matrices}
\label{sec:toeplitzapprox}
To illustrate how the above analysis may be useful, we consider one specific application: approximating a Toeplitz matrix with Toeplitz vector generated by a univalent function. In the next section, we then use this to design a quick rank-structured approximation method for such Toeplitz matrices.

Consider an $n\times n$ real- or complex-valued Toeplitz matrix
\begin{equation}
T=\begin{pmatrix}t_0 & t_{-1} & \ldots & t_{-(n-1)}\\
t_1 & t_0 & \ldots & t_{-(n-2)}\\
\vdots & \vdots & \ddots & \vdots\\
t_{n-1} & t_{n-2} & \ldots & t_0 \end{pmatrix},
\end{equation}
where $t_i=f_1(i)$ for $-n\leq i\leq-1$ and $t_i=f_2(i)$ for $1\leq i\leq n$, with univalent functions $f_1\in\mathcal{O}\left(\mathcal{B}\left(-n/2,n/2\right)\right)$ and $f_2\in\mathcal{O}\left(\mathcal{B}\left(n/2,n/2\right)\right)$. Such matrices arise in \cite{bmb,wws}, as well as in the Gaussian process literature mentioned in the introduction \cite{gaussproc}. In \cite{bmb}, for example, $f_1,f_2$ are defined by $f_2(z)=(1-z)\log\left((z-1)/z\right)+(z+1)\log\left(z/(z+1)\right)$ and $f_1(z)=-f_2(z)$. Other commonly-used kernels that are univalent on the relevant domain include the Cauchy kernel and the Gaussian kernel with large (in this context, $O(n)$) length scale.

From now on, we assume without loss of generality that $n$ is a multiple of 8. This is purely for convenience of notation and is not a restriction on the applicability of the main ideas. We may consider, for example, a given off-diagonal block of $T$ to be a kernel matrix corresponding to the kernel $k(x,y)=f_2(x-y)$:
\begin{equation*}
k(X,Y)=T_{[n/2+n/4+1,n-n/4]\times[1,n/2]},
\end{equation*}
where $X=[n/2+n/4+1,n-n/4]$ and $Y=[1,n/2]$. By our assumption on $f_2$, we are able to use the proxy point method with center $3n/4$ and radius $n/(4\sqrt{2})$ to get an approximation for $k(X,Y)$. Note that here, $R=n/2$ and $r=n/4$, so $R/r=2$. Ensuring this separation between $X$ and $Y$, and hence the analyticity of $f_2$, is the reason why we did not pick $X=[n/2+1,n]$. More generally, this consideration is why using the proxy point method may often not work well to approximate the entire bottom-left subblock of $T$ in this manner. Using Equation~\eqref{eq:proxtol}, together with the function bound in Proposition~\ref{prop:toepuniv} below, guarantees that we would need $O(\log n)$ proxy points to get a given approximation accuracy for large $n$.

\begin{proposition1}
\label{prop:toepuniv}
Let $f$ be holomorphic, bounded, and univalent on $\mathcal{B}\left(n/2,n/2\right)$. Then for $z\in\mathcal{B}\left((n+1)/2,n/2-1\right)$,
\begin{equation*}
\left\vert f(z)\right\vert\leq\left(\frac{n}{2}\right)^3\left\vert f'\left(\frac{n}{2}\right)\right\vert+\left\vert f\left(\frac{n}{2}\right)\right\vert.
\end{equation*}
\end{proposition1}
\begin{proof}
This is an adaptation of the proof of Proposition~\ref{prop:fnbound}; we modify it here to explicitly relate $n$ to the growth rate of $f$. As before, define $g:\mathbb{D}\to\mathbb{C}$ in the following way:
\begin{align*}
h(z)=\left(n/2\right)z+n/2,\quad g(z)=\frac{\left(f\circ h\right)(z)-\left(f\circ h\right)(0)}{\left(f\circ h\right)'(0)}.
\end{align*}
Then $g$ is a regular univalent function, so we have $\left\vert g(z)\right\vert\leq\frac{\vert z\vert}{\left(1-\vert z\vert\right)^2}$ by the growth theorem. Thus, for $z\in\mathcal{B}\left(0,(n-2)/n\right)$,
\begin{equation*}
\left\vert\left(f\circ h\right)(z)-\left(f\circ h\right)(0)\right\vert\leq\frac{\vert z\vert\left\vert\left(f\circ h\right)'(0)\right\vert}{\left(1-\vert z\vert\right)^2}.
\end{equation*}
Therefore, for $z\in\mathcal{B}((n+1)/2,n/2-1)\subseteq\mathcal{B}(n/2,n/2)$ (see Figure~\ref{fig:univregion} for an illustration of these two regions), we have
\begin{align*}
\left\vert f(z)\right\vert=\left\vert\left(f\circ h\right)((2/n)(z-n/2))\right\vert&\leq\frac{\vert (2/n)(z-n/2)\vert\left\vert\left(f\circ h\right)'(0)\right\vert}{\left(1-\vert(2/n)(z-n/2)\vert\right)^2}+\left\vert\left(f\circ h\right)(0)\right\vert\\
&\leq\frac{(1-2/n)|(f\circ h)'(0)|}{(1-(n-2)/n)^2}+|(f\circ h)(0)|\\
&\leq\left(\frac{n}{2}\right)^2\left\vert\left(f\circ h\right)'(0)\right\vert+\left\vert\left(f\circ h\right)(0)\right\vert\\
%&=\left(n/2\right)^2\left\vert f'\left(n/2\right)h'(0)\right\vert+\left\vert f\left(n/2\right)\right\vert\\
&\leq\left(\frac{n}{2}\right)^2\left\vert f'\left(\frac{n}{2}\right)\right\vert\left\vert h'(0)\right\vert+\left\vert f\left(n/2\right)\right\vert\\
&=\left(\frac{n}{2}\right)^2\left(\frac{n}{2}\right)\left\vert f'\left(\frac{n}{2}\right)\right\vert+\left\vert f\left(\frac{n}{2}\right)\right\vert,
\end{align*}
so the result follows from the definition of $h$.
\end{proof}

\begin{figure}[ptbh]
\centering
\tabcolsep5mm
\includegraphics[height=1.8in]{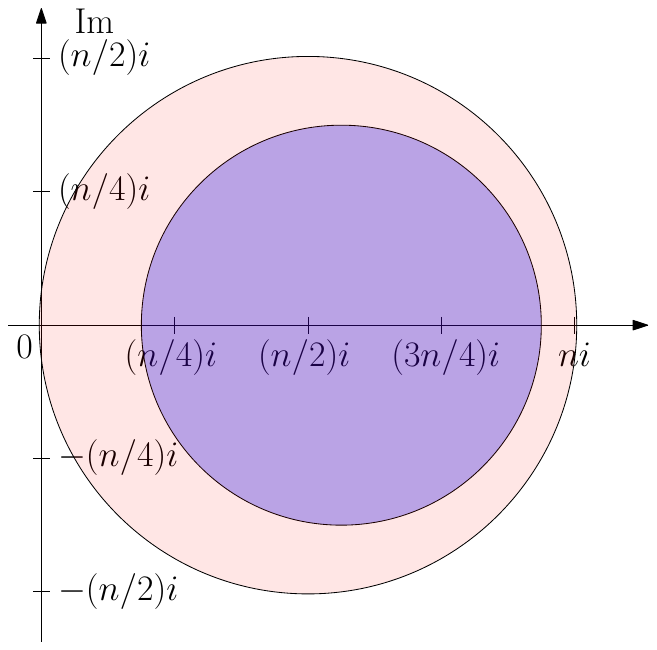}
\caption{The region $\mathcal{B}((n+1)/2,n/2-1)$ (blue), used in the proof of Proposition~\ref{prop:toepuniv}, inside the region $\mathcal{B}(n/2,n/2)$ (red) on which $f$ is assumed to be univalent.}
\label{fig:univregion}
\end{figure}

Plugging the above bound into \eqref{eq:proxtol} and using the fact that, in this case, $R/r=2$, we get the following bound for the 2-norm error incurred using a proxy point approximation $k(X,Y)\approx UV$ with $p$ points:
\begin{equation*}
\left\Vert k(X,Y)-UV\right\Vert_2\leq\left(n^2/8\right)\left(\frac{2^{5/4}}{2^{1/4}-1}\right)\left(\frac{1}{2^{p/4}-1}\right)\left(\left(n/2\right)^3\left\vert f'(n/2)\right\vert+\left\vert f(n/2)\right\vert\right).
\end{equation*}
Therefore, a given error tolerance requires $O(\log n)$ proxy points. In the following section, we further develop this idea to construct an HSS approximation to $T$ with a computational cost that is sublinear in $n$.

\section{Sublinear-complexity HSS approximations of some Toeplitz matrices}
\label{sec:toeplitzconst}
In this section, we detail our sublinear HSS construction algorithm for Toeplitz matrices arising from univalent maps applied to a regular grid in one dimension. The combination of ideas necessary for this method was first explored in \cite{hypercauchy}. More recently, similar ideas have been explored in \cite{bkw}.%The approximation construction algorithm is detailed in Sections~\ref{sec:leafhss} and \ref{sec:resthss}. The analysis, started in Section~\ref{sec:proxy}, of the number of proxy points necessary for a good approximation is continued in this context in Section~\ref{sec:numberproxy}.

To understand the utility of the new scheme, it is worth briefly reviewing existing Toeplitz methods. Over the past six decades, many algorithms have been devised that exploit the additional structure of Toeplitz matrices to perform various matrix operations faster than the counterpart ``naive" algorithms applicable to general matrices. For example, so-called ``fast" (faster than cubic time in the size of the matrix) and ``superfast" (faster than quadratic time in the size of the matrix) algorithms have been devised to solve Toeplitz systems \cite{hei95,goh95,gu98}. The central idea of such algorithms over the past few decades has become to apply fast Fourier transforms (FFTs) and solve the equivalent system in the frequency space. The resulting Cauchy-like matrix turns out both to be quickly solved by Gaussian elimination and to have low off-diagonal rank; hence, it can be quickly approximated by structured matrices \cite{goh95,mar05,toep}. Similarly, in digital signal processing, it has become well-known that the multiplication of Toeplitz convolution matrices with a given signal can be accelerated by applying FFTs and performing the equivalent operation in the frequency domain \cite{jain,burger}.

After certain speedups that may be obtained using randomized techniques, the dominant cost in such structured matrix frequency-domain Toeplitz solution and multiplication algorithms becomes the application of FFTs \cite{fasthss,lib07,toeprs,bkw}. Hence, in theory, general HSS algorithms can potentially achieve a speedup for matrix operations whenever a matrix is both Toeplitz and has low off-diagonal rank before the application of FFTs \cite{fasthss}. In such algorithms, the dominant cost becomes the construction of the structured approximant; thus, bringing this cost down is a worthwhile endeavor. In this work, we show that for Toeplitz matrices whose Toeplitz vector is generated by a univalent map applied to the positive integers, we are able to reduce the HSS construction time cost from $O\left(r^2n\right)$ \cite{toepls,hsscost} or $O\left(rn\right)$ \cite{bkw} to $O\left(\log^5n\right)$ in the size $n$ of a square matrix with off-diagonal rank bound $r$. While the new algorithm is less widely applicable, it may nevertheless be applied to certain important classes of matrices, such as those arising as covariance matrices of Gaussian processes \cite{gaussproc,gausstoep}, or from a convolution of a digital signal with a large Gaussian filter \cite{burger}. In addition, since this new scheme does not rely on Fourier space representation, it has the advantage of preserving the rank structure of any diagonal or rank-structured summand that may be added to the Toeplitz matrix, such as when localizing eigenvalues \cite{eig14,vog16}.%{\textcolor{red}{[since section 2.3 already had some discussions of the motivation for studying $T$, I suggest to move some discussions from here to there]}}

The first key idea in our new construction scheme is the use of the proxy point method in the process of obtaining an interpolative decomposition (also known as skeletonization) of the HSS blocks, as was done previously in \cite{mar05-1}. The second key idea is the reuse of the resulting approximate basis matrix factors for all the HSS blocks at a given HSS depth, as was done previously in \cite{hypercauchy}. Here is where we use our new analysis from Section~\ref{sec:proxy} to guide the process of obtaining these approximate basis factors, as well as to understand when the construction scheme is applicable. In the case that the proxy point method is used to approximate off-diagonal blocks of Toeplitz matrices with Toeplitz vector generated by a complex-analytic univalent map, this error is then shown to increase slowly enough in $n$ to allow our construction algorithm to be performed in sublinear time relative to $n$. While we do not perform an operation count to justify this here, since our algorithm is almost identical to the one outlined in \cite{hypercauchy}, the analysis from Section 5 of that paper applies to the algorithm outlined in this section.

\subsection{Review of HSS matrix approximations}
\label{sec:review}
Next, we review the data structure known as a hierarchically semiseparable (HSS) matrix form. Here we only give a brief outline to introduce notation; more details can be found, e.g., in \cite{fasthss}.%{\textcolor{red}{[since HSS is now well known, please condense the following as much as possible; there is no need to give a formal definition; for figures 3.1 and 3.2, at most one of them would be enough]}}
\begin{definition}\label{def:hss}Let $M$ be a matrix. Assume without loss of generality that $M$ is square with row/column size $n$ equal to a power of two, and let $L<\log_2(n)$. Recursively partition in two the set of row/column indices of $M$ for a total of $2^L-1$ subsets. Specifically, for each $0\leq l\leq L$, partition $[1:n]$ into the $2^l$ sets
\begin{equation*}
\mathcal{I}_l=\left\{\left[1:\frac{n}{2^l}\right],\left[\frac{n}{2^l}+1:\frac{n}{2^{l-1}}\right],\ldots,\left[(2^l-1)\frac{n}{2^l}+1:n\right]\right\}.
\end{equation*}
Let $\mathcal{I}=\bigcup_{j=0}^{L}\mathcal{I}_l$, and impose a partial order on $\mathcal{I}$ by set inclusion. We call $\mathcal{I}$ the \emph{$L$-level HSS index set} of $M$. Then its Hasse diagram $\mathcal{T}$ is a perfect binary tree, called the \emph{HSS tree} of $M$. Now, for each $1\leq j\leq 2^L-1$, define ${\bf i}_j\in\mathcal{I}$ to be the element corresponding to the $j$th vertex of $\mathcal{T}$ in its postordered traversal. For each $1\leq j\leq 2^L-1$, define $M^-_j=M_{{\bf i}_j\times[1:n]\setminus{\bf i}_j}$ and $M^|_j=M_{[1:n]\setminus{\bf i}_j\times{\bf i}_j}$; these are called the \emph{$j$th HSS block row} and \emph{$j$th HSS block column}, respectively. The \emph{HSS rank} of $M$ is the maximum rank, over all $1\leq j\leq 2^L-1$, of $M^-_j$ and $M^|_j$.

%\begin{figure}[ptbh]
%\centering
%\tabcolsep5mm
%\begin{tabular}[c]{p{2.1in}p{2.1in}}
%\centering
%\includegraphics[height=2in]{hssblockrow} & \includegraphics[height=2in]{hssblockcol}\\
%\vspace{.1in}{\qquad\quad\enspace\small (a) HSS block rows} & \vspace{.1in}{\qquad\enspace\small (b) HSS block columns}
%\end{tabular}
%\caption{The HSS block rows and columns of $M$ where $L=2$. The labeled green blocks with rounded corners correspond to the HSS tree depth $l=1$; the labeled yellow blocks with sharp corners correspond to the HSS tree depth $l=2$.}\label{fig:hssblocks}
%\end{figure}

An \emph{$L$-level HSS form} for $M$ is a 6-tuple $\{{\bf D},{\bf U},{\bf R},{\bf V},{\bf W},{\bf B}\}$, where:
\begin{itemize}
\item ${\bf U}=\{U_j\}_{1\leq j\leq2^L-2}$, ${\bf V}=\{V_j\}_{1\leq j\leq2^L-2}$, and ${\bf B}=\{B_j\}_{1\leq j\leq2^L-2}$ are sets of matrices;
\item ${\bf D}=\{D_j\}_{j\in{\bf I}}$ is a set of matrices, where ${\bf I}$ is the set of postordered indices of leaves of $\mathcal{T}$;
\item and ${\bf R}=\{R_j\}_{j\in{\bf J}}$ and ${\bf W}=\{W_j\}_{j\in{\bf J}}$ are sets of matrices, where ${\bf J}$ is the set of postordered indices of vertices of $\mathcal{T}$ of depth at least two;
\end{itemize}
such that
\begin{enumerate}
\item $D_j=M_{{\bf i}_j\times{\bf i}_j}$ for $j\in{\bf I}$;
\item $M_{{\bf i}_j\times{\bf i}_{\mathrm{sib}(j)}}=U_jB_jV_{\mathrm{sib}(j)}^T$ for $1\leq j\leq2^L-2$, where $B_j$ is full-rank and $\mathrm{sib}(j)$ is the postordered index of the sibling of $j$;\label{item:hssapprox}
\item and $U_j=\begin{pmatrix}U_{c_1(j)}R_{c_1(j)} \\
U_{c_2(j)}R_{c_2(j)}\end{pmatrix}$ and $V_j=\begin{pmatrix}V_{c_1(j)}W_{c_1(j)} \\
V_{c_2(j)}W_{c_2(j)}\end{pmatrix}$ for $1\leq j\leq2^L-2$, where $c_1(j)$ and $c_2(j)$ denote the postordered indices of the left and right children of the postordered $j$th vertex of $\mathcal{T}$, respectively.
\end{enumerate}
\end{definition}
Collectively, all of the matrices mentioned in this definition are called \emph{HSS generators} of $M$. Note that we can find generators whose sizes can all be bounded by the HSS rank of $M$ \cite{fasthss}; this is the main point constructing the HSS form of $M$ and the reason for the efficiency of HSS algorithms. Figure~\ref{fig:hssgens} illustrates the various relationships of the HSS generators of $M$.

\begin{figure}[ptbh]
\centering
\tabcolsep5mm
\begin{tabular}[c]{p{2.1in}p{2.1in}}
\centering
\includegraphics[height=2in]{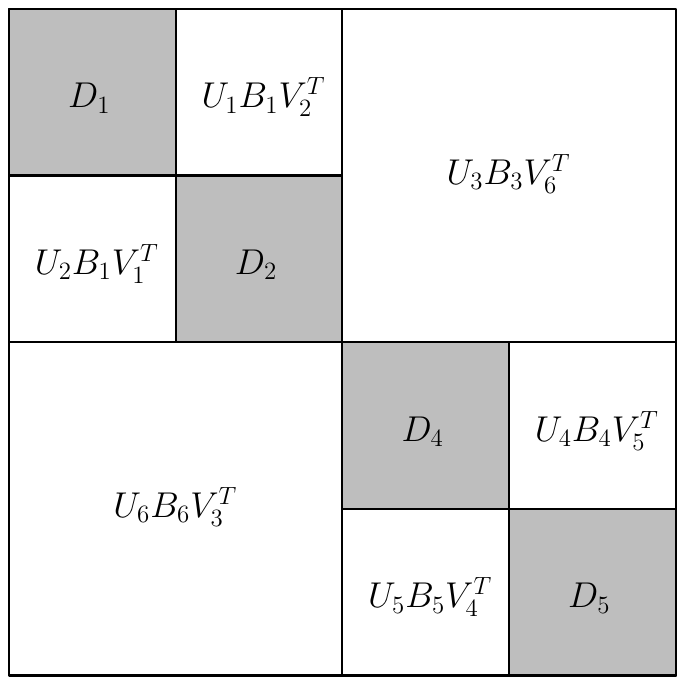} & \includegraphics[height=2in]{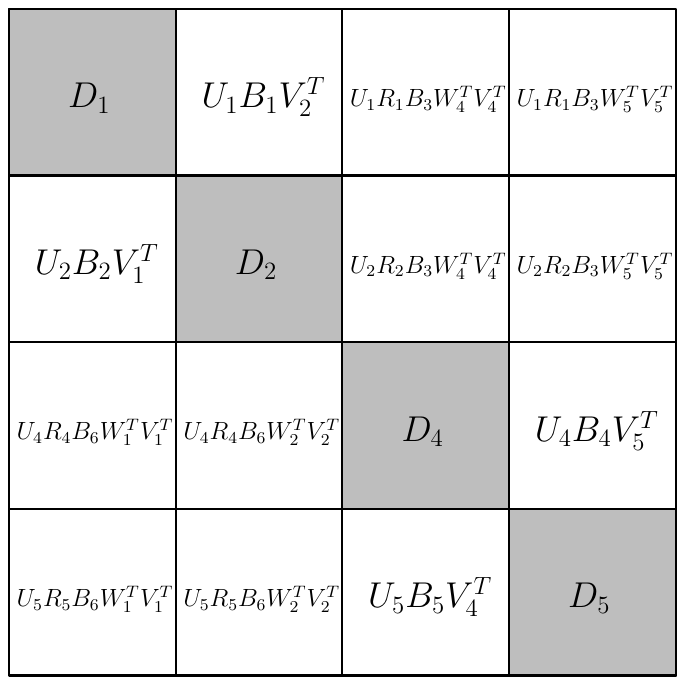}
\end{tabular}
\caption{The HSS generator products of $M$ placed into the blocks of $M$ that they generate.}\label{fig:hssgens}
\end{figure}

Finally, we say $M$ has \emph{numerical HSS rank $k$ with respect to a tolerance $\tau$} if the numerical rank of $M^-_j$ and $M^|_j$ with respect to a tolerance $\tau$ is at most $k$ over all $1\leq j\leq2^L-1$. We define an \emph{$L$-level rank-$k$ HSS approximation} of $M$ to be an $L$-level HSS form of $M$ where we replace condition~\ref{item:hssapprox} in Definition~\ref{def:hss} above with the following:
\begin{enumerate}
\item[$2'$] $M_{{\bf i}_j\times{\bf i}_{\mathrm{sib}(j)}}\approx U_jB_jV_{\mathrm{sib}(j)}^T$ for $1\leq j\leq2^L-2$, where $B_j$ is a $k\times k$ matrix.
\end{enumerate}

In the case that $M$ is a Toeplitz matrix, for $1\leq j\leq2^L-2$, existing methods of constructing any one of $U_j$, $V_j$, $R_j$, $W_j$, or $B_j$ in general scale linearly in $n$, for at least some $j$ \cite{toepls}. In Section~\ref{sec:proxyapprox}, we outline an algorithm to construct any such generator with sublinear cost. This is useful depending on how the HSS form of $M$ is subsequently used. For example, our method confers a speedup if only part of the output of a matrix-vector multiplication with $M$ is needed.

\subsection{HSS approximation of $T$ via the proxy point method}
\label{sec:proxyapprox}
%{\textcolor{red}{[since $T$ was already introduced in section 2.3, I suggest to reorganize the following introduction of $T$ and move to section 2.3 to avoid duplication]}}
Define $T$, $f_1$, and $f_2$ as in Section~\ref{sec:toeplitzapprox}. To more easily illustrate the application of this method, we will deal with the symmetric case $t_{-i}=t_i$ (so $f_1(-i)=f_2(i)$) for $i=0,\ldots,n-1$; define $f(z)=f_1(-z)$. The non-symmetric case is handled similarly (see Section~\ref{sec:resthss}). Since we are constructing generators for approximations to the off-diagonal blocks of $T$, we may again assume without loss of generality that $t_0=0$. Furthermore, since this algorithm is meant to apply to large matrices, we may assume that $n$ is a power of two greater than 8.

\subsubsection{Constructing the HSS row generators}
\label{sec:leafhss}
Let $L\leq\log_2(n)-2$ be the number levels in the desired HSS approximation to $T$. Let $r$ be a bound for the numerical HSS rank of $T$; we assume specifically that $r$ is $O(\log n)$. The analysis in Section~\ref{sec:numberproxy} can actually be used to give a bound for $r$. In particular, we can show that $r$ is $O(\log^2 n)$; see Section~\ref{sec:ext}.

For each $1\leq i,j\leq n$ with $i\neq j$, we have $T_{i,j}=f\left(|j - i|\right)$. Hence, we may consider an HSS block $T^-_j$ to be the kernel matrix $k({\bf i}_j,[1:n]\setminus{\bf i}_j)$, where $k$ is defined by $k(x,y)=f(|x-y|)$. Directly finding a low-rank factorization for $T^-_j$, for example as when $j=1$ in the first step in the HSS construction algorithm in \cite{fasthss}, is already prohibitively expensive with at least $O(n)$ flops. Instead, we may follow a similar list of steps as in \cite[Section~3.2]{hypercauchy}:

\begin{itemize}
\item If $j$ is not leaf of $\mathcal{T}$, we assume we have performed this list of steps on its children $c_1(j)$ and $c_2(j)$ to obtain sets of indices ${\bf i}'_{c_1(j)},{\bf i}'_{c_2(j)}\subseteq{\bf i}_j$. If $j$ is a leaf, we define $c_1(j)=c_2(j)=j$ and ${\bf i}'_j={\bf i}_j$. Then, we define ${\bf\overline{i}}'_j={\bf i}'_{c_1(j)}\cup{\bf i}'_{c_2(j)}$ and apply a proxy point approximation to $\left(T^-_j\right)_{{\bf\overline{i}}'_j\times[1:n-|{\bf i}_j|]}$. However, since we only assumed that $f$ is analytic on $\mathcal{B}\left(n/2,n/2\right)$, by Equation~\eqref{eq:proxtol}, the ratio $R/r$ in this case could be as large $1/n$, and therefore the number of proxy points $p$ required to obtain a reasonably good approximation may be prohibitively large. Hence, we first separate ${\bf i}_j$ into the ``near-field" and ``far-field" subsets ${\bf \hat{i}}_j$ and ${\bf \tilde{i}}_j={\bf i}_j\setminus{\bf \hat{i}}_j$, respectively, where ${\bf \hat{i}}_j$ is the subset of ${\bf i}_j$ consisting of its first and last $|{\bf i}_j|/4$ values, respectively, ordered the usual way. We then define ${\bf\hat{i}}'_j={\bf\hat{i}}_j\cap{\bf\overline{i}}'_j$, ${\bf\tilde{i}}'_j={\bf\tilde{i}}_j\cap{\bf\overline{i}}'_j$, $T^-_{j,1}=k\left({\bf\hat{i}}'_j,[1:n]\setminus{\bf i}_j\right)$, and $T^-_{j,2}=k\left({\bf\tilde{i}}'_j,[1:n]\setminus{\bf i}_j\right)$; and we apply a proxy point approximation to only the far-field subblock: $T^-_{j,2}\approx \tilde{U}_j\tilde{V}_j$. For this approximation, we use a circular contour with center $(1/2)\left(\min({\bf i}_j)+\max({\bf i}_j)\right)$ and radius $(\sqrt{2}/2)\left(\max({\bf i}_j)-\min({\bf i}_j)+1\right)$ to obtain $R/r=2$. (See Figure~\ref{fig:proxysetup} and Figure~\ref{fig:proxysetup2}.)

\begin{figure}[ptbh]
\centering
\includegraphics[height=2.03in]{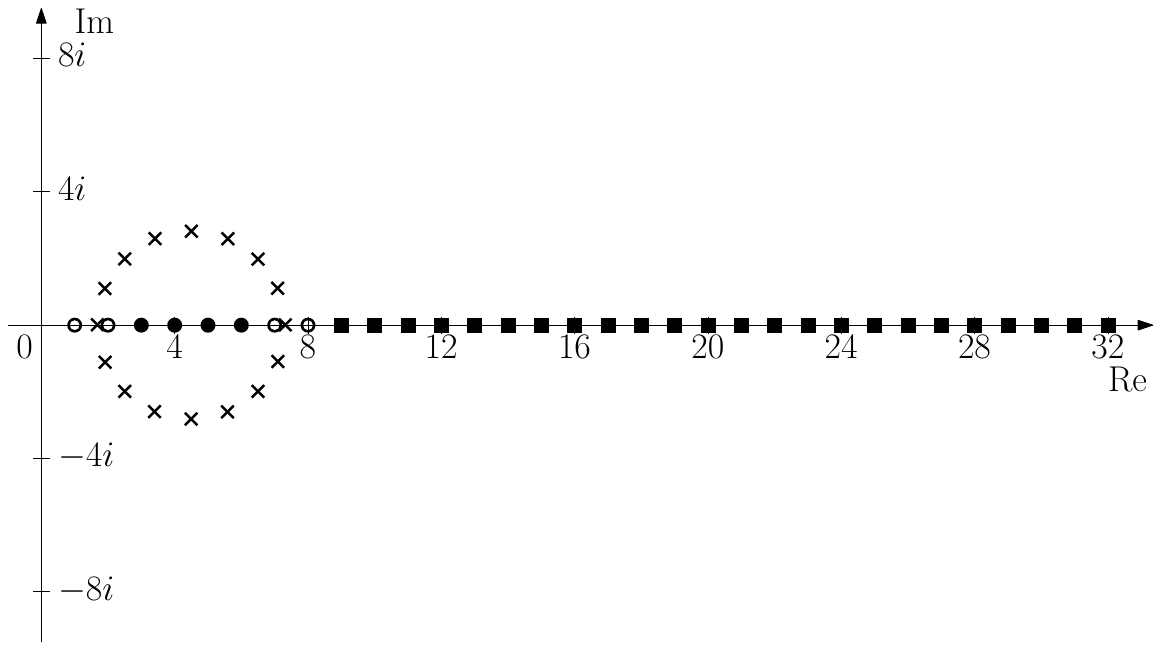}
\includegraphics[height=2in]{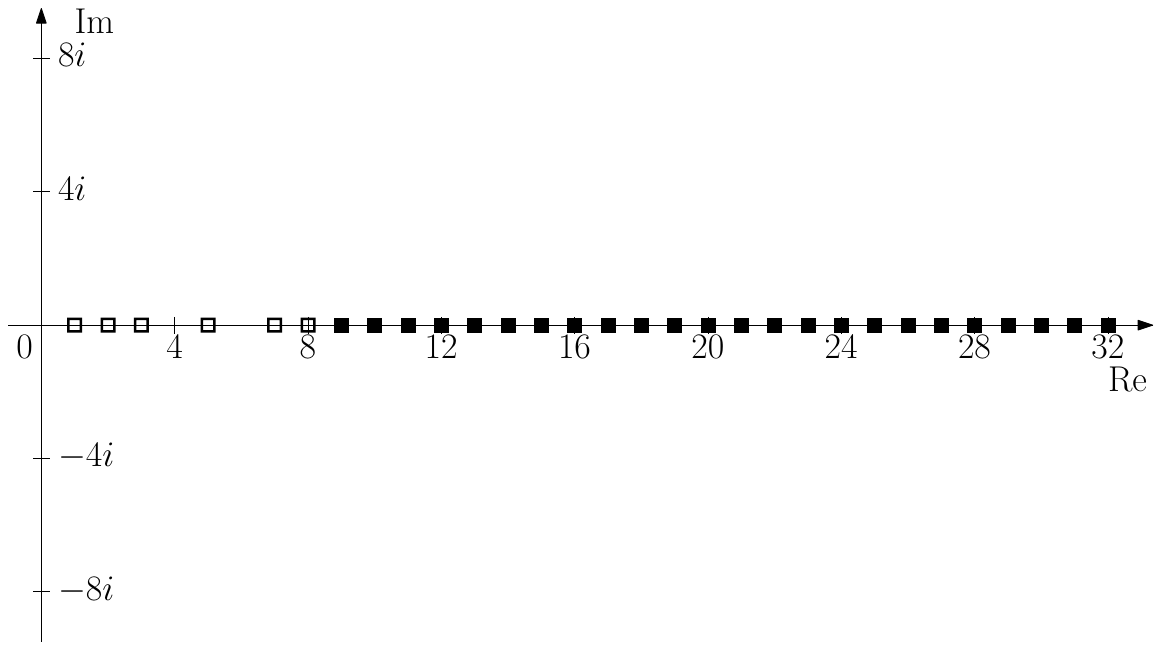}
\caption{Top: near-field points ${\bf\tilde{i}}'_1$ ({\protect\tiny $\circ$}), far-field points ${\bf\hat{i}}'_1$ ({\protect\tiny $\bullet$}), proxy points ({\protect\tiny $\times$}), and the points $[9:32]$ ({\protect\tiny $\blacksquare$}) involved in the approximation of the leaf HSS block $T^-_1\vert_{{\bf\overline{i}}'_1\times[1:n-|{\bf i}_1|]}=k\left([1:8],[9:32]\right)$ for a matrix of size $n=32$, number of HSS levels $L=2$, and number of proxy points $p=16$. Bottom: the resulting index set ${\bf i}'_1$ ({\protect\tiny $\square$}). (These are ``cartoon illustrations" and are not actual results from such an approximation applied to a subblock of an actual matrix $T$.)}
\label{fig:proxysetup}
\end{figure}
\begin{figure}[ptbh]
\centering
\includegraphics[height=2.03in]{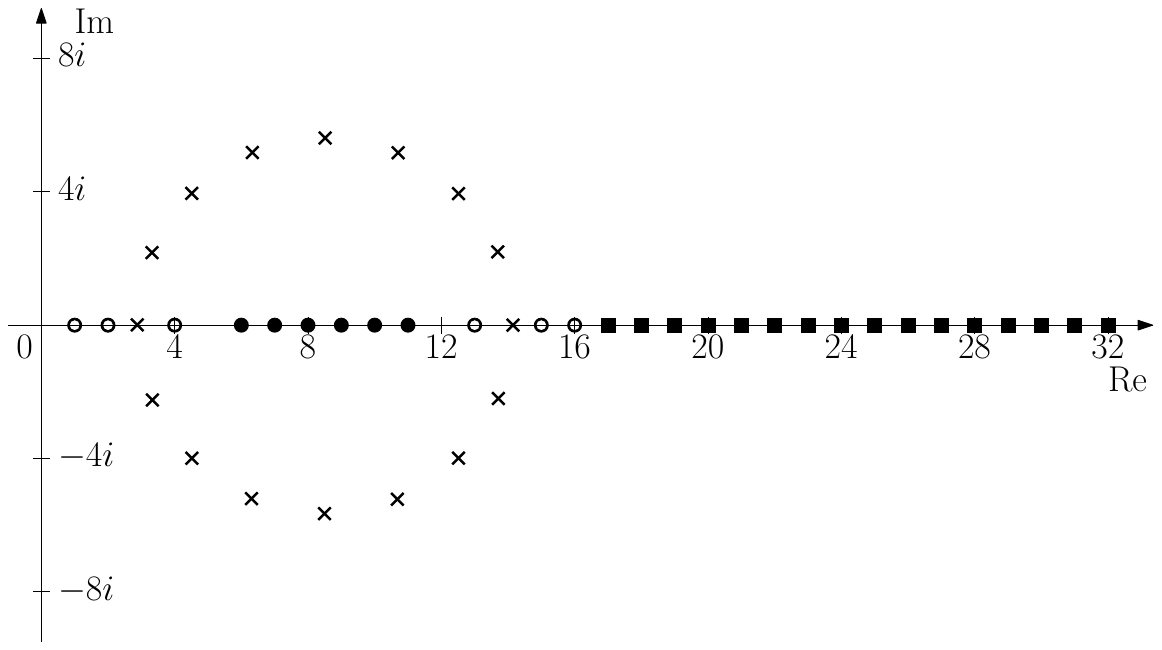}
\includegraphics[height=2in]{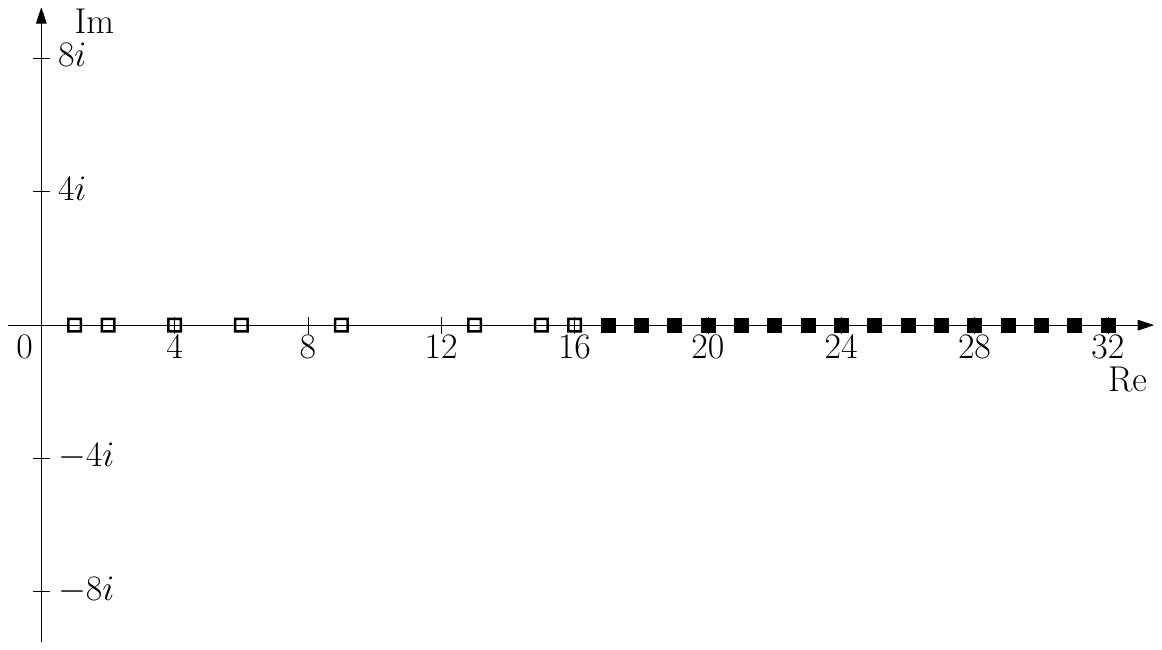}
\caption{Top: near-field points ${\bf\tilde{i}}'_3$ ({\protect\tiny $\circ$}), far-field points ${\bf\hat{i}}'_3$ ({\protect\tiny $\bullet$}), proxy points ({\protect\tiny $\times$}), and the points $[17:32]$ ({\protect\tiny $\blacksquare$}) involved in the approximation of height-2 HSS block $T^-_3\vert_{{\bf\overline{i}}'_3\times[1:n-|{\bf i}_3|]}=k({\bf\overline{i}}'_j,[17:32])$ for a matrix of size $n=32$, number of HSS levels $L=2$, and number of proxy points $p=16$. Bottom: the resulting index set ${\bf i}'_3$ ({\protect\tiny $\square$}). (As noted in Figure~\ref{fig:proxysetup} above, these are ``cartoon illustrations" and are not reflective of actual numerical results.)}
\label{fig:proxysetup2}
\end{figure}

We thus have
\begin{equation*}
\left(T^-_j\right)\vert_{{\bf\overline{i}}'_j\times[1:n-|{\bf i}_j|]}=\Pi_i\begin{pmatrix}
T^-_{j,1} \\
T^-_{j,2}
\end{pmatrix}=\Pi_i\begin{pmatrix}
I & 0 \\
0 & \tilde{U}_i\end{pmatrix}\begin{pmatrix}
T^-_{j,1} \\
\tilde{V}_i
\end{pmatrix},
\end{equation*}
where $\Pi_i$ is a permutation matrix.

\item Next, we find a strong rank-revealing QR factorization
\begin{equation*}
\tilde{U}_j=\overline{U}_j\left(\Pi^{\prime T}_j\tilde{U}_j\right)\vert_{[1:r]\times[1:p]},
\end{equation*}
where $\overline{U}_j=\left(\begin{array}[c]{cc}I & E_j\end{array}\right)^T$ and $\Pi'_j$ is a permutation matrix. In theory, any rank-revealing QR factorization may suffice, but in practice the SRRQR factorization results in greater numerical stability when working with $E_j$ (and hence $U_j$); see \cite{gu96} for details. We then have
\begin{equation*}
T^-_{j,2}\approx\overline{U}_j\left(\Pi^{\prime T}_j\tilde{U}_j\right)_{[1:r]\times[1:p]}\tilde{V}_j\approx\overline{U}_j\left(\Pi^{\prime T}_jT^-_{j,2}\right)_{[1:r]\times[1:n]\setminus{\bf i}_j},
\end{equation*}
so
\begin{align*}
\left(T^-_j\right)\vert_{{\bf\overline{i}}'_j\times[1:n-|{\bf i}_j|]}&\approx\Pi_j\begin{pmatrix}
T^-_{j,1} \\
T^-_{j,2}
\end{pmatrix}\\
&\approx U_j\begin{pmatrix}
\left(\Pi_j^TT^-_j|_{{\bf\overline{i}}'_j\times[1:n]\setminus{\bf i}_j}\right)|_{\left[1:\left\vert{\bf\hat{i}}'_j\right\vert\right]\times[1:n-|{\bf i}_j|]} \\
\left(\Pi_j^TT^-_j|_{{\bf\overline{i}}'_j\times[1:n]\setminus{\bf i}_j}\right)|_{\left[\left\vert{\bf\hat{i}}'_j\right\vert+1:\left\vert{\bf\hat{i}}'_j\right\vert+r\right]\times[1:n-|{\bf i}_j|]}
\end{pmatrix}\\
&=U_jT^-\vert_{{\bf i}'_j\times[1,n]\setminus{\bf i}_j},
\end{align*}
where ${\bf i}'_j\subseteq{\bf i}_j$ is of size $\left\vert{\bf\hat{i}}'_j\right\vert+r$ and
\begin{equation*}
U_j=\Pi_j\begin{pmatrix}
I & 0 \\
0 & \Pi'_j\begin{pmatrix}
I \\
E_j
\end{pmatrix}
\end{pmatrix}.
\end{equation*}
\end{itemize}
Now, if $j$ is a leaf, this last display is precisely the HSS generator. If $j$ is not a leaf, we set $R_{c_1(j)}=U_j|_{\left({\bf i}'_j\cap{\bf i}_{c_1(j)}\right)\times\left[1:\left\vert{\bf\overline{i}}'_j\right\vert+r\right]}$ and $R_{c_2(j)}=U_j|_{\left({\bf i}'_j\cap{\bf i}_{c_2(j)}\right)\times\left[1:\left\vert{\bf\overline{i}}'_j\right\vert+r\right]}$.

\subsubsection{Constructing the remaining HSS generators}
\label{sec:resthss}
Now, note that for each $j$ at the leaf level in $\mathcal{T}$, each matrix $\left(T^-_j\right)_{({\bf i}'_{c_1(j)}\cup{\bf i}'_{c_2(j)})\times[1:n-|{\bf i}_j|]}$ used to obtain the generator $U_j$ yields the same $U_j$ regardless of the specific value of $j$. Hence, ${\bf i}_j'$ is the same for any leaf-level $j$. Therefore we can show by induction on $L$ that for each $j$ at the same depth of $\mathcal{T}$, $U_j$ and ${\bf i}'_j$ are the same. This shows that {\it we only need to perform the above steps once at each depth of $\mathcal{T}$} to obtain all the HSS row generators $U_j$ for a leaf-level $j$ and $R_j$ for $j$ with $\mathrm{depth}(j)\leq L-2$. Furthermore, because the above steps do not depend on the specific function $k(x,y)=f(|x-y|)$ as long as $f$ satisfies the analyticity condition, {\it the above steps also construct the HSS column generators} $V_j$ and $W_j$. So, we set $V_j=U_j$ for a leaf-level $j$ and $W_j=R_j$ for $j$ with $\mathrm{depth}(j)\leq L-2$. This last fact shows why our assumption that $f_1=f_2$ at the beginning of this section confers no loss of generality. Finally, for each $j\in\mathcal{T}$, we set $B_j=T_{{\bf i}'_j\times{\bf i}'_{\mathrm{sib}(j)}}$.

So far, we have not mentioned how many proxy points are required for the far-field approximation at each level in the above construction method; we will explore this issue in the next section. We note here, however, that if the number of proxy points is $O(\log n)$, then the flop count of this method is the same as that of the method in \cite{hypercauchy}, for a total of $O(\log^5n)$ flops. We will show that this is indeed the case in the next section whenever $f$ satisfies the univalence condition in Proposition~\ref{prop:toepuniv}.

\subsection{Number of proxy points required}
\label{sec:numberproxy}
First, we fix some notation: let $\mathcal{T},\mathcal{I}$ be the HSS tree and HSS index set of $T$, respectively, and let $j\in\mathcal{T}$ have corresponding index set ${\bf i}_j\in\mathcal{I}$. We define ${\bf\hat{i}}_j$ to be the subset of ${\bf i}_j$ missing its least and greatest $|{\bf i}_j|/4$ elements, ordered the usual way. We also define $\tilde{T}_n^{j,p}$ to be the $p$-point proxy point approximation (in the first variable) to the subblock $T\vert_{{\bf\hat{i}}_j\times[1:n]\setminus{\bf i}_j}=k({\bf\hat{i}}_j,[1:n]\setminus{\bf i}_j)$ with center $(1/2)\left(\min({\bf i}_j)+\max({\bf i}_j)\right)$ and radius $(1/2)\left(\max({\bf i}_j)-\min({\bf i}_j)+1\right)$.

Next, we show with Example~\ref{ex:cosproxypoint} that for general $f\in\mathcal{O}\left(\mathcal{B}\left(n/2,n/2\right)\right)$, this approximation need not have good convergence properties. This corresponds to the case that $f$ grows rapidly away from $n/2$; this corresponds to the case that the function bound in Equation~\eqref{eq:proxtol} is large.

\begin{example}
\label{ex:cosproxypoint}
For $n\geq8$, let $T_n\in\mathbb{R}_{n\times n}$ have entries $\left(T_n\right)_{i,j}=\cos\left((\pi/4)\left\vert j-i\right\vert\right)$, and let $\mathcal{I}_n=\{{\bf i}_{n,1},{\bf i}_{n,2},{\bf i}_{n,3}\}$ be its one-level HSS index set, indexed the usual way. Then the associated function $f(z)=f_1(z)=f_2(z)=\cos\left((\pi z)/4\right)$ is holomorphic on $\mathcal{B}\left(n/2,n/2\right)$. Table~\ref{tab:cosproxypoint} shows the minimum number of points $p$ required for $\tilde{T}_n^{1,p}$ to approximate $\left(T_n\right)\vert_{{\bf\hat{i}}_{n,1}\times[1:n]\setminus{\bf i}_{n,1}}$ to a given tolerance. Note that even for such small matrix sizes and large tolerance, the number of proxy points required already scales linearly with $n$. It is also worth noting that the rank of $T_n$ is at most 8 for all $n$ and every off-diagonal block.

\begin{table}[h]
\centering
\caption{The size $n$ of the matrix $T_n$ and the minimum number of proxy points $p$ required to attain $\left\Vert\left(T_n\right)\vert_{{\bf\hat{i}}_{n,1}\times[1:n]\setminus{\bf i}_{n,1}}-\tilde{T}_n^{1,p}\right\Vert_F<10^{-6}$.}
\vspace{.5cm}
\begin{tabular}{|p{.2in}||p{.2in}|p{.2in}|p{.2in}|p{.2in}|p{.2in}|p{.2in}|p{.2in}|p{.2in}|p{.2in}|}
\hline
$n$ & 16 & 24 & 32 & 40 & 48 & 56 & 64 & 72 & 80\\
\hline
$p$ & 21 & 27 & 34 & 39 & 47 & 53 & 59 & 65 & 72\\
\hline
\end{tabular}
\label{tab:cosproxypoint}
\end{table}
\end{example}
The poor performance in Example~\ref{ex:cosproxypoint} makes sense in light of Proposition~\ref{prop:proxelwisebound}: for each $y\in Y=[1:n]\setminus{\bf i}_{n,1}=\left[n/2+1:n\right]$, $k(z,y)=f\left(\vert z-y\vert\right)$ must not be too large in absolute value for all $z\in\partial F=\partial \mathcal{B}\left(n/4+1/2,\sqrt[4]{8}n/8\right)$ in order for a small number of proxy points to be sufficient. But in this case, we may observe that, if $y=n/2+1$, the maximum of $f\left(\vert y-z\vert\right)=\cos\left((\pi/4)\vert y - z\vert\right)$ along $z\in\partial F$ grows exponentially in $n$. In particular, even though cosine is bounded on the real line, its growth along the one-dimensional line $z(t)=t+it$ (for real $t$) is exponential. Hence, the growth of $p$ with respect to $n$ shown in Table~\ref{tab:cosproxypoint} gives evidence that $f$ with large values on $\mathcal{B}\left(n/2,n/2\right)$ may require a lot of proxy points for an accurate approximation.

On the other hand, if $f$ is bounded on the real line and univalent on $\mathcal{B}\left(n/2,n/2\right)$, we show in Example~\ref{ex:cosnproxypoint} that we do seem to have good proxy point convergence for the HSS approximation outlined in Sections~\ref{sec:leafhss} and \ref{sec:resthss}.
\begin{example}
\label{ex:cosnproxypoint}
For $n\geq8$, let $T_n\in\mathbb{R}_{n\times n}$ have entries $\left(T_n\right)_{i,j}=\cos\left((\pi\left\vert j-i\right\vert)/n\right)$. Then the associated function $f(z)=f_1(z)=f_2(z)=\cos\left((\pi z)/n\right)$ is univalent on $\mathcal{B}\left(n/2,n/2\right)$ and bounded on the real line. Table \ref{tab:cosnproxypoint} shows the minimum number of proxy points required for the sublinear HSS construction method to yield a given approximation tolerance for the topmost HSS row block.
\begin{table}[h]
\centering
\caption{The size $n$ of the matrix $T_n$ and the minimum value of $p$ such that the $L$-level HSS approximation constructed in Sections~\ref{sec:leafhss} and \ref{sec:resthss} with $p$ proxy points approximates the topmost HSS block of $T_n$ to a relative Frobenius norm error $10^{-10}$.}
\vspace{.7cm}
\begin{tabular}{|p{.12in}||p{.27in}|p{.27in}|p{.27in}|p{.27in}|p{.27in}|p{.27in}|p{.34in}|p{.34in}|p{.34in}|p{.34in}|}
\hline
$n$ & 2048 & 4096 & 4096 & 8192 & 8192 & 8192 & 16384 & 16384 & 16384 & 16384 \\
\hline
$L$ & 1 & 1 & 2 & 1 & 2 & 3 & 1 & 2 & 3 & 4 \\
\hline
$p$ & 26 & 27 & 27 & 28 & 28 & 28 & 28 & 28 & 28 & 28 \\
\hline
\end{tabular}
\label{tab:cosnproxypoint}
\end{table}
\end{example}
Example~\ref{ex:cosnproxypoint} gives numerical evidence that the proxy-point approximation has good enough convergence properties to be used in practice, even despite global HSS error accumulation. We now show that good proxy point convergence is true for general univalent $f$ in this context, as well as in the general case of Proposition~\ref{prop:fnbound}.

\begin{lemma}
\label{lem:integralbound}
Let $\mathcal{I}$ be an HSS index set for an $n\times n$ matrix, where $n$ is a power of 2; let ${\bf i}\in\mathcal{I}$; and let $l$ be the height of ${\bf i}$. Define $k(x,y)=f\left(\vert y-x\vert\right)$ for some $f\in\mathcal{O}\left(\mathcal{B}\left(n/2,n/2\right)\right)$; let $x\in{\bf\hat{i}}$; let $y\in[1:n]\setminus{\bf i}$; and let $p\in\mathbb{N}$. Then
\begin{equation}
\left\vert k(x,y)-\sum_{j=1}^p\left(\frac{\left(\sqrt[4]{8}\right)2^{l-1}}{p}\right)\frac{\omega^jk\left(z_j,y\right)}{z_j-x}\right\vert<14\frac{\max_{z\in\partial F}\left(\left\vert f\left(y-z\right)\right\vert\right)}{2^{p/4}-1},
\end{equation}
where $z_j=c+\left(\sqrt[4]{8}\right)2^{l-1}\omega^j$, $F$ is the open ball with center $c$ and radius $\left(\sqrt[4]{8}\right)2^{l-1}$, and $c=(1/2)\left(\max({\bf i})-\min({\bf i})+1\right)$.
\end{lemma}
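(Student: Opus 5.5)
This is a direct specialization of Proposition~\ref{prop:proxelwisebound} to the geometry of one HSS block. The plan is to identify the disks $D$ and $E$ attached to ${\bf i}$, check that $R/r=2$, and then evaluate the constant $\alpha$.

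\textbf{Step 1: geometry.} Since ${\bf i}$ has height $l$, it consists of $2^l$ consecutive integers, so $|{\bf i}|=2^l$. Its center is the point $c$, which I read as the midpoint $(\min({\bf i})+\max({\bf i}))/2$, as in Section~\ref{sec:numberproxy}. The retained set ${\bf\hat{i}}$ lies within distance about $|{\bf i}|/4=2^{l-2}$ of $c$. Every $y\in[1:n]\setminus{\bf i}$ lies at distance at least about $2^{l-1}$ from $c$.

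I would take $r=2^{l-2}$ (plus a half-unit slack if needed, so that ${\bf\hat{i}}\subseteq D=\mathcal{B}(c,r)$) and $R=2r=2^{l-1}$. Then $R/r=2$, and the contour radius $\sqrt{Rr}$ and the radius of $F$, namely $r(R/r)^{3/4}=2^{l-2}\cdot 2^{3/4}=\sqrt[4]{8}\,2^{l-2}$, come out as the quantities appearing in the statement. The factor-of-two discrepancies between these and the radii written in the statement would be reconciled by tracking the half-integer endpoints and the normalization of $|{\bf i}|$ carefully.

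\textbf{Step 2: analyticity.} For each $y\notin{\bf i}$, I would verify that $z\mapsto k(z,y)=f(|y-z|)$ is analytic on $E=\mathcal{B}(c,R)$, interpreting $|y-z|$ as $y-z$ or $z-y$ according to which side of ${\bf i}$ the point $y$ lies on, consistent with the $f(y-z)$ in the bound. For $z\in E$, $w=y-z$ lies in $\mathcal{B}(y-c,R)$. Because $|y-c|\ge R$ and $|y-c|+R\le n$, this disk is contained in the real-direction disk of radius $n/2$ on which $f$ is holomorphic, i.e.\ $\mathcal{B}(n/2,n/2)$.

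\textbf{Step 3: apply Proposition~\ref{prop:proxelwisebound}.} With $R/r=2$ the proposition gives the bound $\alpha\,\max_{\partial F}|f(y-z)|/(2^{p/4}-1)$, where
\[
\alpha=\frac{2\cdot 2^{1/4}}{2^{1/4}-1}\approx\frac{2.378}{0.1892}\approx 12.57<14 .
\]
This yields the claimed strict inequality. The quadrature prefactor $\sqrt{Rr}/p$ matches the coefficient in the statement once the radii are fixed as in Step 1.

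\textbf{Main obstacle.} The only delicate point is Step 2, together with the radius bookkeeping in Step 1. Blocks ${\bf i}$ adjacent to the ends of $[1:n]$ are the worst case: one must confirm that $\mathcal{B}(y-c,R)$ stays inside $\mathcal{B}(n/2,n/2)$. I would handle this by noting that $y-c$ is real with $R\le|y-c|\le n-R$ (taking $y>c$, and symmetrically otherwise). The disk $\mathcal{B}(y-c,R)$ then lies inside $\mathcal{B}(n/2,n/2)$ whenever $|y-c-n/2|+R\le n/2$, which holds precisely because of these endpoint bounds. Here the half-unit slack from the integer grid has to be absorbed, and that is where the stated radii, and the strict slack between $\alpha\approx 12.6$ and $14$, are used.
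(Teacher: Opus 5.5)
Your route is the paper's. Its proof is the one-line specialization of Proposition~\ref{prop:proxelwisebound} with $R/r=2$ and $\alpha=2\sqrt[4]{2}/(\sqrt[4]{2}-1)\approx 12.57<14$. The paper writes ``$R=2^{l-1}$ and $r=2^l$'' with the labels swapped. It means $r=2^{l-1}$ for $D$ and $R=2^l$ for $E$, which needs $|\mathbf{i}|=2^{l+1}$ so that $[1:n]\setminus\mathbf{i}$ stays outside $E$. Your $r=2^{l-2}$, $R=2^{l-1}$ is the same choice under the normalization $|\mathbf{i}|=2^l$. Your Step~2, deducing $\mathcal{B}(y-c,R)\subseteq\mathcal{B}(n/2,n/2)$ from $R\le|y-c|\le n-R$, checks the analyticity hypothesis that the paper leaves implicit, and it is correct.

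\textbf{The gap.} Your claim that the remaining radius mismatches are ``factor-of-two discrepancies'' that bookkeeping will absorb is false.

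In Proposition~\ref{prop:proxelwisebound} the nodes lie on the circle of radius $\sqrt{Rr}=2^{1/2}r$, while $F$ has radius $r(R/r)^{3/4}=2^{3/4}r$. The statement instead uses the single value $\sqrt[4]{8}\,2^{l-1}$ for the nodes, for the quadrature prefactor, and for $F$. The ratio $2^{-1/4}$ of node radius to $F$-radius does not depend on how $|\mathbf{i}|$ is normalized or how half-integer endpoints are handled. So no choice of $r$ makes your sentence ``the quadrature prefactor $\sqrt{Rr}/p$ matches the coefficient in the statement'' true.

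This is not cosmetic: with nodes literally on $\partial F$, the inequality is false. Fix $y>\max(\mathbf{i})$, let $\rho=\sqrt[4]{8}\,2^{l-1}$ and $f(w)=((y-c-w)/\rho)^p$. Under your reading $k(z,y)=f(y-z)$, this gives $k(z,y)=((z-c)/\rho)^p$, which has modulus $1$ on $\partial F$. With $t=(x-c)/\rho\in(-1,1)$:
\begin{itemize}
\item the stated quadrature sum equals $\sum_{q\ge0}t^{pq}=1/(1-t^p)$;
\item the exact value is $k(x,y)=t^p$;
\item so the error is $-(1+t^{2p}/(1-t^p))$, of modulus at least $1$, which exceeds $14/(2^{p/4}-1)$ as soon as $p\ge16$.
\end{itemize}
What your argument (and the paper's) actually proves is the corrected lemma with $z_j=c+\sqrt{2}\,2^{l-1}\omega^j$ and prefactor $\sqrt{2}\,2^{l-1}/p$, in the paper's normalization $|\mathbf{i}|=2^{l+1}$. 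You should state that correction explicitly instead of promising a reconciliation.

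\textbf{A smaller point.} No half-unit slack is needed. With $|\mathbf{i}|=2^l$, every $x\in\hat{\mathbf{i}}$ has $|x-c|\le(2^{l-1}-1)/2<r$, and every $y\notin\mathbf{i}$ has $|y-c|\ge(2^l+1)/2>R$, so $R/r=2$ exactly. The gap between $12.57$ and $14$ plays no geometric role; enlarging $r$ would change $R/r$ and hence $\alpha$.
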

\begin{proof}
This is a straightforward application of Proposition~\ref{prop:proxelwisebound}, where we set $X~=~{\bf\hat{i}}$; $Y=[1:n]\setminus{\bf i}$; and $D$ and $E$ to be the open balls with center $c$ and radii $R=2^{l-1}$ and $r=2^l$, respectively. We thus get $\alpha=2\sqrt[4]{2}/(\sqrt[4]{2}-1)<14$.
\end{proof}

Therefore, by the maximum modulus principle and Lemma~\ref{lem:integralbound}, if we find that $\max_{z\in\partial\mathcal{B}\left((n+1)/2,n/2-1\right)}\left\vert f(z)\right\vert$ has a sufficiently small bound with respect to $n$, we would need only $O\left(\log n\right)+\vert\log\epsilon\vert$ proxy points to obtain an entrywise proxy point approximation with tolerance $\epsilon$ at every height of the HSS tree. But note that we obtained exactly such a bound in Section~\ref{sec:proxy} in Proposition~\ref{prop:toepuniv} if $f$ is univalent on $\mathcal{B}\left(n/2,n/2\right)$, and if $f$ and its derivative does not grow too quickly quickly with respect to $n$ along the real axis. Hence, we obtain the following absolute error bound for the proxy point approximation of an off-diagonal ``far-field" row block:
\begin{corollary}
\label{cor:hssblockbound}
Let $T$ be the $n\times n$ matrix with entries $T_{i,j}=f\left(\vert j-i\vert\right)$, where $f\in\mathcal{O}\left(\mathcal{B}\left(n/2,n/2\right)\right)$ is injective on $\mathcal{B}\left(n/2,n/2\right)$. Let $\mathcal{I}$ be the HSS index set of $T$, and let ${\bf i}_j\in\mathcal{I}$. Then
\begin{equation*}
\left\Vert T\vert_{{\bf\hat{i}}_j\times[1:n]\setminus{\bf i}_j}-\tilde{T}^{j,p}\right\Vert_F\leq\left(\frac{7n^2}{2^{p/4+1}-2}\right)\left((n^3/8)\left\vert f'\left(n/2\right)\right\vert+\left\vert f\left(n/2\right)\right\vert\right).
\end{equation*}
\end{corollary}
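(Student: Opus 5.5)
The plan is to combine the entrywise bound of Lemma~\ref{lem:integralbound} with the growth bound of Proposition~\ref{prop:toepuniv}, and then pass from entries to the Frobenius norm by summing over the block.

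\emph{Step 1: entrywise error.} Fix $x\in{\bf\hat{i}}_j$ and $y\in[1:n]\setminus{\bf i}_j$. Lemma~\ref{lem:integralbound} bounds the entrywise error of $\tilde{T}^{j,p}$ by
\[
14\,\frac{\max_{z\in\partial F}|f(y-z)|}{2^{p/4}-1}.
\]
The geometric content is that $F$ sits inside the univalence disk. I would check that for every such $y$, the set $\{\,y-z : z\in\overline{F}\,\}$ (or $\{\,z-y\,\}$, depending on which side of ${\bf i}_j$ the point $y$ lies, since $|y-z|$ is read as the analytic branch $\pm(y-z)$) is contained in $\overline{\mathcal{B}((n+1)/2,n/2-1)}$. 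This should follow from the construction: the distance from $c$ to $y$ is at least half the width of ${\bf i}_j$, the radius of $F$ is a fixed fraction $8^{1/4}/2$ of that half-width, and all arguments lie in $[1,n-1]$ plus a complex disk. If containment fails near the boundary, I would shrink to the closed disk and use the maximum modulus principle, as in the paper's remark right after Lemma~\ref{lem:integralbound}.

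\emph{Step 2: apply the univalence growth bound.} Since $f$ is injective and holomorphic on $\mathcal{B}(n/2,n/2)$, it is univalent there, so Proposition~\ref{prop:toepuniv} gives
\[
\max|f|\le (n/2)^3|f'(n/2)|+|f(n/2)|=(n^3/8)|f'(n/2)|+|f(n/2)|
\]
on that subdisk. Proposition~\ref{prop:toepuniv} assumes boundedness, which is used only implicitly. I would either note that the growth theorem needs only univalence, or accept the boundedness hypothesis tacitly. Consequently every entry of the error is at most
\[
\frac{14}{2^{p/4}-1}\Bigl((n^3/8)|f'(n/2)|+|f(n/2)|\Bigr).
\]

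\emph{Step 3: Frobenius sum.} The block has $|{\bf\hat{i}}_j|\cdot(n-|{\bf i}_j|)$ entries. Here $|{\bf\hat{i}}_j|=|{\bf i}_j|/2$, and with $s=|{\bf i}_j|\le n/2$ we get $(s/2)(n-s)\le n^2/4$ (indeed $\le n^2/8$ when $s\le n/2$). Hence
\[
\|\cdot\|_F\le \sqrt{\#\text{entries}}\cdot\max|\text{entry}|\le \#\text{entries}\cdot\max|\text{entry}|,
\]
and using $n^2/4$ entries gives the constant
\[
\frac{14\,n^2/4}{2^{p/4}-1}=\frac{7n^2}{2^{p/4+1}-2},
\]
which is exactly the stated bound. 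So the count is the crude bound $n^2/4$ times the entrywise bound, without even needing the square root.

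\emph{Main obstacle.} The delicate point is Step 1's geometric containment together with the consistent choice of analytic branch for $|y-z|$ on $F$. Everything else is bookkeeping.
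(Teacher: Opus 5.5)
Your proposal is correct and follows the paper's proof step for step: the entrywise bound from Lemma~\ref{lem:integralbound}, the maximum modulus principle to pass to the disk $\mathcal{B}((n+1)/2,n/2-1)$, the univalent growth bound of Proposition~\ref{prop:toepuniv} (the paper's proof cites Proposition~\ref{prop:fnbound} here, apparently by mistake), and then summing over at most $n^2/4$ entries. Your count $(s/2)(n-s)\le n^2/8$ is more careful than the paper's claim that $|[1:n]\setminus{\bf i}_j|\le n/2$, which fails below the top HSS level, though the $n^2/4$ product bound that both arguments rely on still holds.
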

\begin{proof}
By Lemma~\ref{lem:integralbound}, the maximum modulus principle, and Proposition~\ref{prop:fnbound}, in that order, we have that for each $1\leq u\leq\vert{\bf\hat{i}}_j\vert$ and $1\leq v\leq\left\vert[1:n]\setminus{\bf i}_j\right\vert$,
\begin{align*}
\left\vert \left(T\vert_{{\bf\hat{i}}_j\times[1:n]\setminus{\bf i}_j}\right)_{u,v}-\left(\tilde{T}^{j,p}\right)_{u,v}\right\vert&<14\frac{\max_{y\in[1:n]\setminus{\bf i},\;z\in\partial F}\left(\left\vert f\left(y-z\right)\right\vert\right)}{2^{p/4}-1}\\
&\leq14\frac{\max_{z\in\partial\mathcal{B}\left((n+1)/2,n/2-1\right)}\left(\left\vert f\left(z\right)\right\vert\right)}{2^{p/4}-1}\\
&\leq\frac{14}{2^{p/4}-1}\left((n^3/8)\left\vert f'\left(n/2\right)\right\vert+\left\vert f\left(n/2\right)\right\vert\right).
\end{align*}
Since $\vert{\bf\hat{i}}_j\vert,\left\vert[1,n]\setminus{\bf i}_j\right\vert\leq\frac{n}{2}$, the result follows by summing over all $u$ and $v$.
\end{proof}
Thus, to obtain a given proxy point approximation tolerance $\epsilon$ for any level, we need $O\left(\log n\right)+O\left(\left\vert f\left(n/2\right)\right\vert\right)+O\left(\left\vert f'\left(n/2\right)\right\vert\right)+O\left(\vert\log\epsilon\vert\right)$ proxy points. In practice, $f$ and its derivative are often bounded on the real line, as in Examples \ref{ex:nonunivalent} and \ref{ex:cauchy} below.

\section{Discussion and numerical tests}
\label{sec:numerical}
First, we note that, although injectivity of $f$ as defined in the previous section is a sufficient condition, it is not strictly necessary in practice to enable the use of our sublinear Toeplitz HSS construction algorithm. The point of the injectivity criterion is simply to allow, using Proposition~\ref{prop:fnbound}, a sufficiently slow growth bound for $f$ that depends only on its radius of analyticity. However, functions $f$ that are not univalent on the relevant region can also grow sufficiently slowly in order for their related construction algorithm outlined in the previous section to work on the related Toeplitz matrix. Example~\ref{ex:nonunivalent} illustrates this.

\begin{example} For $n\geq8$, let $T_n\in\mathbb{R}_{n\times n}$ have entries $\left(T_n\right)_{i,j}=\left(\left\vert j-i\right\vert-n/2\right)^2$, so the associated function $f(z)=f_1(z)=f_2(z)=\left(z-n/2\right)^2$ is not univalent on $\mathcal{B}\left(n/2,n/2\right)$. Table~\ref{tab:nonunivalent} lists the relative approximation tolerance for various HSS approximations of $T$ from Sections~\ref{sec:leafhss} and \ref{sec:resthss}. (For the scheme as outlined there, we set the maximum off-diagonal rank to $r=28$. This is sufficient, since each matrix involved has a relative off-diagonal numerical rank of 3 with respect to the tolerance $10^{-14}$.) Note that relatively small values of $p$ result in a good approximation.
\label{ex:nonunivalent}
\end{example}
\begin{table}[h]
\centering
\caption{The relative Frobenius norm errors of the $L$-level HSS approximation to $T_n$ from Sections~\ref{sec:leafhss} and \ref{sec:resthss} using $p$ proxy points. The top and bottom tables show the errors using 32 and 48 proxy points at each level, respectively.}
\hspace{-.23cm}
\begin{tabular}{|p{1.11in}||p{.38in}|p{.38in}|p{.38in}|p{.38in}|p{.38in}|p{.38in}|}
\hline
$n$ & 2048 & 2048 & 8192 & 8192 & 16384 & 16384 \\
\hline
$L$ & 2 & 4  & 4 & 6 & 6 & 7 \\
\hline
rel. err. (e$10^{-13}$) & $5.4863$ & $2.9697$ & $7.7119$ & $3.3541$ & $6.9370$ & $3.4362$\\
\hline
\end{tabular}\vspace{.02in}

\begin{tabular}{|p{1.11in}||p{.38in}|p{.38in}|p{.38in}|p{.38in}|p{.38in}|p{.38in}|}
\hline
$n$ & 2048 & 2048 & 8192 & 8192 & 16384 & 16384 \\
\hline
$L$ & 2 & 4 & 4 & 6 & 6 & 7 \\
\hline
rel. err. (e$10^{-13}$) & $2.0441$ & $9.3656$ & $3.2532$ & $1.0675$ & $2.9239$ & $1.0933$ \\
\hline
\end{tabular}
\label{tab:nonunivalent}
\end{table}

On the other hand, the conditions of Proposition~\ref{cor:hssblockbound} provides a wide class of functions for which our sublinear HSS construction algorithm is guaranteed to work.

\begin{example} Since $f_1(z)=n/z$ and $f_2(z)=-n/z$ are univalent on $\mathcal{B}\left(n/2,n/2\right)$, the method from Sections~\ref{sec:leafhss} and \ref{sec:resthss} should work to find the HSS generators of $T_n$, the Cauchy kernel matrix evaluated at $n$ equidistant points in $[-1,1]$, in sublinear time. Table~\ref{tab:cauchy} lists the relative approximation tolerance for various HSS approximations to the matrix $T_n\in\mathbb{R}_{n\times n}$ with off-diagonal values $\left(T_n\right)_{i,j}=n/(j-i)$ and diagonal values equal to 0. The maximum relative off-diagonal numerical rank $r$ is also listed; for this experiment, we set $r=28$ for each matrix. It is worth noting that the accuracy bound given in \cite{kercompr} may also be used in lieu of Proposition~\ref{prop:proxelwisebound} for this particular kernel matrix to indicate applicability of the scheme from Section~\ref{sec:toeplitzconst}.
\label{ex:cauchy}
\end{example}
\begin{table}[h]
\centering
\caption{The relative Frobenius norm errors of the $L$-level HSS approximation to $T_n$ from Sections~\ref{sec:leafhss} and \ref{sec:resthss} using $p$ proxy points, as well as the numerical HSS rank $r$ of $T_n$ with tolerance $10^{-14}$. Again, the top and bottom tables show the errors using 32 and 48 proxy points at each level, respectively.}
\hspace{-.23cm}
\begin{tabular}{|p{1.11in}||p{.38in}|p{.38in}|p{.38in}|p{.38in}|p{.38in}|p{.38in}|}
\hline
$n$ & 2048 & 2048 & 8192 & 8192 & 16384 & 16384 \\
\hline
$r$ & 26 & 26 & 30 & 30 & 33 & 33 \\
\hline
$L$ & 2 & 4 & 4 & 6 & 6 & 7 \\
\hline
rel. err. (e$10^{-14}$) & $7.1041$ & $5.9208$ & $8.1024$ & $6.1210$ & $9.4705$ & $6.1585$ \\
\hline
\end{tabular}\vspace{.02in}

\begin{tabular}{|p{1.11in}||p{.38in}|p{.38in}|p{.38in}|p{.38in}|p{.38in}|p{.38in}|}
\hline
$n$ & 2048 & 2048 & 8192 & 8192 & 16384 & 16384 \\
\hline
$r$ & 26 & 26 & 30 & 30 & 33 & 33 \\
\hline
$L$ & 2 & 4 & 4 & 6 & 6 & 7 \\
\hline
rel. err. (e$10^{-14}$) & $1.7926$ & $1.1841$ & $2.1102$ & $1.2407$ & $2.5062$ & $1.2521$ \\
\hline
\end{tabular}
\label{tab:cauchy}
\end{table}

Again, we note that even after global error accumulation associated with an HSS tree of depth 6 and 7 in Examples~\ref{ex:cauchy} and \ref{ex:nonunivalent}, the relative error is still quite low. This gives evidence that the asymptotic error decay regime from Proposition~\ref{prop:fnbound} holds well enough in practice: note that the maximum of the function in Example~\ref{ex:nonunivalent} is even increasing on $\mathcal{B}(n/2,n/2)$ as $n$ grows. This increase, however, is polynomial in $n$, and therefore so is the numerator of the bound given by Corollary~\ref{cor:hssblockbound}. The denominator of this bound is exponential in $p$, which helps explain the quality of the approximation in Example~\ref{ex:nonunivalent}.

\section{Extensions}
\label{sec:ext}
In a forthcoming study \cite{horninglepilov}, we use the arguments of Section~\ref{sec:numberproxy} to bound the numerical rank of certain classes of matrices. Specifically, we may conformally map the complement of the sets of points $X$ and $Y$ to improve their separation ratio $R/r$ in the quadrature rule error given by Proposition~\ref{prop:proxelwisebound}. This allows us to obtain bounds similar to, but much better than, those given by applying Corollary~\ref{cor:hssblockbound}, and in turn it enables us to argue when certain one-dimensional kernel matrices may have low numerical rank.

Furthermore, it is also possible to perform a more detailed analysis of the global error accumulated after all compression steps in Sections~\ref{sec:leafhss} and \ref{sec:resthss} are performed, including the SRRQR factorization steps. This would give additional motivation for proving an absolute bound in Proposition~\ref{prop:proxelwisebound}, Proposition~\ref{prop:toepuniv}, and Corollary~\ref{cor:hssblockbound}, since relative bounds are harder to integrate into a global HSS error analysis.

Finally, we may also extend the bound of Proposition~\ref{prop:proxelwisebound} to analytic functions of more than one (complex) variable. In particular, no part of the argument used in this proposition relies on complex analysis concepts that apply only in the one-variable case. Hence, we may explore generalizations of the complex-analytic low-rank approximations discussed here to more general Toeplitz matrices, as well as to non-Toeplitz matrices that are defined by analytic functions in other ways. When doing so, we may also combine the results of Section~\ref{sec:numberproxy} with the hierarchical partitioning described in \cite{mhs}.

\end{document}